\theoremstyle{definition}
\newtheorem{theorem}{Theorem}[section]
\newtheorem{assumption}[theorem]{Assumption}
\newtheorem{remark}{Remark}[section]
\newtheorem{lemma}{Lemma}[section]
\numberwithin{equation}{section}%
\numberwithin{table}{section}%
\numberwithin{figure}{section}
\def\3bar{{|\hspace{-.02in}|\hspace{-.02in}|}}
\newcommand\grad{\operatorname{grad}}
\renewcommand\div{\operatorname{div}}
\newcommand\curl{\operatorname{curl}}
\newcommand\ran{\operatorname{ran}}
\newcommand\I{{\operatorname I}}
\def\d{\text{d}}
\begin{document}
\title[Quadratic and cubic Lagrange finite elements]{Quadratic and cubic Lagrange finite elements for mixed Laplace eigenvalue problems on criss-cross meshes}

\keywords
{}
\author{Kaibo Hu}
\address{School of Mathematics, the University of Edinburgh, James Clerk Maxwell Building, Peter Guthrie Tait Rd, Edinburgh EH9 3FD, UK.}
\email{kaibo.hu@ed.ac.uk}

\author{Jiguang Sun}
\address{Department of Mathematical Sciences, Michigan Technological University, Houghton, MI 49931, USA. }
\email{jiguangs@mtu.edu}

\author{Qian Zhang}
\address{Department of Mathematical Sciences, Michigan Technological University, Houghton, MI 49931, USA. }
\email{qzhang15@mtu.edu}

\keywords{Lagrange finite element, Fortin interpolation, mixed Laplace eigenvalue problem, bounded commuting projections}

\begin{abstract} 
In \cite{boffi2000problem}, it was shown that the linear Lagrange element space on criss-cross meshes and its divergence exhibit spurious eigenvalues when applied in the mixed formulation of the Laplace eigenvalue problem, despite satisfying both the inf-sup condition and ellipticity on the discrete kernel. The lack of a Fortin interpolation is responsible for the spurious eigenvalues produced by the linear Lagrange space. In contrast,  results in \cite{drab104} confirm that quartic and higher-order Lagrange elements do not yield spurious eigenvalues on general meshes without nearly singular vertices, including criss-cross meshes as a special case.  In this paper, we investigate quadratic and cubic Lagrange elements on criss-cross meshes. We prove the convergence of discrete eigenvalues by fitting the Lagrange elements on criss-cross meshes into a complex and constructing a Fortin interpolation. As a by-product, we construct bounded commuting projections for the finite element Stokes complex, which induces isomorphisms between cohomologies of the continuous and discrete complexes. We provide numerical examples to validate the theoretical results. 
\end{abstract}	
\maketitle
\section{Introduction}
Let $\Omega\subset \mathbb R^2$ be a simply-connected Lipschitz polygonal domain. The Laplace eigenvalue problem is to seek $u\in H_0^1(\Omega)$ such that
\begin{align}\label{eigP}
	-\Delta u &=\lambda u\text{ in }\Omega.
\end{align}
Introducing $\bm\sigma = \grad u$,  we can obtain the mixed formulation:
find $(\bm \sigma, u, \lambda)$ in $H(\div;\Omega)\times L^2(\Omega)\times \mathbb R$ such that
\begin{eqnarray}
\begin{split}\label{vf}
	(\bm \sigma,\bm \tau)+(u,\div\bm \tau) = &0 &  \forall \bm\tau\in H(\div;\Omega),\\
	(\div\bm \sigma, v) =& -\lambda(u,v)& \forall v\in L^2(\Omega).
\end{split}
\end{eqnarray}
Given finite dimensional subspaces $V_h$ and $W_h$ of $H(\div;\Omega)$ and $L^2(\Omega)$, respectively, the mixed finite element approximation reads: find 
$(\bm \sigma_h, u_h, \lambda_h)$ in $V_h\times W_h\times \mathbb R$ such that
\begin{eqnarray}
\begin{split}\label{fem0}
	(\bm \sigma_h,\bm \tau_h)+(u_h,\div\bm \tau_h) &= 0 \ & \forall \bm\tau_h\in V_h,\\
	(\div\bm \sigma_h, v_h) &= -\lambda_h(u_h,v_h)\ &\forall v_h\in W_h.	
\end{split}
\end{eqnarray}
It is shown in \cite{boffi2000problem} that inappropriate choices of finite element pair $V_h$ and $W_h$ would lead to spurious eigenvalues.
To ensure the convergence of the finite element approximation \eqref{fem0} to \eqref{vf}, it is crucial that the spaces $V_h$ and $W_h$ satisfy the inf-sup condition, ellipticity on the discrete kernel, as well as an additional condition related to the Fortin interpolation. The N\'ed\'elec finite element space of order $k$ for $V_h$ and the piecewise polynomial space of order $k-1$ for $W_h$ satisfy these conditions, and can therefore produce correctly convergent eigenvalues \cite{boffi2010finite,boffi1999computational}.  

In contrast to the N\'ed\'elec finite elements, the Lagrange finite element space of order $k$ for $V_h$ and its divergence for $W_h$ produce correctly convergent eigenvalues only  on spacial triangulations. 
In \cite{drab104},  Boffi et al. rigorously proved the convergence of such a pair $V_h-W_h$ for the Maxwell eigenvalue problem on the Powell-Sabin triangulation when $k=1$, Clough-Tocher triangulation when $k=2$, and general shape-regular triangulations without nearly singular vertices when $k\geq 4$. 
They achieve this by constructing Fortin-type interpolations.
As Problem \eqref{vf} is equivalent to the mixed form of the Maxwell eigenvalue problem considered in \cite{drab104} by rotating $\bm\sigma$ by $-\pi/2$, the results in \cite{drab104} indicate that the quartic and higher-order Lagrange finite element spaces lead to correctly convergent finite element schemes for Problem \eqref{vf} on criss-cross meshes. Whereas, as shown in \cite{boffi2000problem}, the linear Lagrange element space on the criss-cross mesh exhibits spurious eigenvalues due to the lack of a Fortin interpolation. To the best of our knowledge, there have been no previous studies investigating the behavior of quadratic and cubic Lagrange elements on criss-cross meshes for problem \eqref{vf}. This paper aims to fill this gap.
 

In this paper, we establish the convergence of discrete eigenvalues computed from \eqref{fem0} with quadratic and cubic Lagrange elements on criss-cross meshes for $V_h$.
Our analysis relies on the existence of bounded Fortin operators. To construct the Fortin interpolation, we fit the quadratic and cubic Lagrange element spaces $V_h$ on the criss-cross mesh into a subcomplex of the Stokes complex, as shown in the following diagram,
\begin{equation}\label{H2complex2-2D}
\begin{tikzcd}
0 \arrow[r] 
& \mathbb{R} \arrow[r,"\subset"]  & H^2(\Omega)\arrow[d ] \arrow[r,"\curl"]  & \bm H^1(\Omega)\arrow[d ]\arrow[r,"\div"]  & L^2(\Omega)\arrow[d ]\arrow[r]& 0\\
 0 \arrow[r] 
& \mathbb{R} \arrow[r,"\subset"]  & \Sigma_h \arrow[r,"\curl"]  & V_h\arrow[r,"\div"] &\div V_h\arrow[r] & 0.
\end{tikzcd}
\end{equation}
The $H^2$-conforming finite element space $\Sigma_h$ on the criss-cross mesh is available in the literature \cite{lai2007spline} and  $V_h$-$\div V_h$ is shown to be inf-sup stable for the Stokes problem \cite{arnold1992quadratic}. Nevertheless, to the best of our knowledge, the sequence \eqref{H2complex2-2D}, which links these spaces, has not been presented yet. 
Using the finite element subcomplex \eqref{H2complex2-2D}, we can redefine the degrees of freedom (DOFs) for  $V_h$. The canonical interpolation defined by the new DOFs satisfies the necessary commutativity, but it cannot serve as a Fortin interpolation for Problem \eqref{fem0} since the boundedness requires more smoothness. We will use the Scott-Zhang interpolation to fix this boundedness issue. 
We note that our approach differs from \cite{drab104}, where a Fortin-type interpolation is constructed for functions that have piecewise-polynomial divergence. 
To use the mixed formulation \eqref{fem0}, we also provide a characterization of the space $\div V_h$. 
Moreover, the cohomology of finite element spaces plays a crucial role in numerical computation for many problems (see, e.g., \cite{arnold2018finite,arnold2006finite,alonso2018finite,rodriguez2013construction}). 
Our Fortin interpolation, together with a Scott-Zhang type interpolation, leads to bounded commuting projections for complex \eqref{H2complex2-2D}, which induces isomorphisms between the cohomologies of \eqref{H2complex2-2D}.

The pair $V_h-\div V_h$ in the diagram can also be used for solving the Stokes problem.  In \cite{arnold1992quadratic}, Arnold and Qin use the quadratic Lagrange element space $V_h$ for the Stokes problem. They used a piecewise polynomial space for pressure, which includes a spurious pressure mode. Consequently, a post-processing procedure is necessary to remove the spurious pressure mode from the numerical pressure. With the explicit characterization of $\div V_h$ in our work, we can use the pair $V_h - \div V_h$ to solve the Stokes problem, and the numerical pressure will converge without the need for post-processing.

The remaining part of the paper is organized as follows. In Section 2, we present notation, the discrete eigenvalue problem and an equivalent formulation, the source problem and solution operators, and a general theoretical framework.
In Section 3, we present the definition of each space in the finite element Stokes complex, characterize the space $\div V_h$, and prove the cohomology of the complex. 
In Section 4, we construct a Fortin interpolation that satisfies desired properties. In Section 5, we construct bounded commuting projections for \eqref{H2complex2-2D} and show their approximation property. In Section 6, we provide numerical examples to validate our convergence analysis. 

\section{Setting of the Problem}
\subsection{Notation}
Let $\mathcal Q_h$ be a partition of $\Omega$ with convex quadrilaterals. Each $Q\in\mathcal Q_h$ is further split into four triangles by the two diagonals of $Q$. We denote the triangulation of $Q$ as $\mathcal T_h^Q$. Let $\mathcal T_h$ be the partition of $\Omega$ with triangles in $\cup_{Q\in \mathcal Q_h}\mathcal T_h^Q$. We denote by $h_T$ the diameter of an element $T\in  \mathcal T_h$. Define $h= \max_{T\in\mathcal T_h}h_T$ to be the mesh size of $\mathcal T_h$. We define $\mathcal V_h$ to be the set of vertices in the mesh $\mathcal Q_h$. For $Q\in \mathcal Q_h$, we denote by $\mathcal V_h(Q)$ and $\mathcal E_h(Q)$ the sets of vertices and edges of $Q$.  Denote by $\bm n_Q$ the unit outward normal vector to $Q$. For an edge $e$ of $T\in\mathcal T_h$, we denote by $\bm n_e$ the unit normal vector to $e$.
For $Q\in \mathcal Q_h$, we define $h_Q = \max_{T\in \mathcal T_h^Q}h_T$. 

We suppose the triangles in $\mathcal T_h$ are shape-regular, i.e.,
\[\frac{h_T}{\rho_T}\leq \sigma_0,\]
where $\rho_T$ is the radius of the largest closed ball contained in $\overline T.$

For any $Q\in \mathcal Q_h$, we can map $\hat Q=(-1,1)\times(-1,1)$ to $Q$ by
\[{\bm x}=F_Q(\hat{ \bm x}),\]
where $F_Q=F_i$ on $T_i$ with $F_i(\hat{\bm x})=B_i\hat{\bm x}+\bm b_i$ mapping $\hat T_i$ to $T_i$, $i=1,2,3,4$. See Figure \ref{Q-hatQ} for $\hat T_i$ and $T_i$. 
\begin{figure}[h!]
\begin{center} 
\setlength{\unitlength}{1.5cm}

\begin{picture}(5,2)(0.5,-0)
\put(2,0){
\begin{picture}(2,0)
\put(-3, 0){\line(0,2){2}} 
\put(-3, 2){\line(2,0){2}} 
\put(-3,0){\line(2,0){2}}
\put(-1,0){\line(0,2){2}}
\put(-3,0){\line(1,1){2}}
\put(-3,2){\line(1,-1){2}}
\put(-2.13,0.3){$\hat T_1$}
\put(-2.13,1.5){$\hat T_3$}
\put(-2.7,0.95){$\hat T_2$}
\put(-1.65,0.95){$\hat T_4$}
\put(0.4,1){$\Longrightarrow$}
\end{picture}}

\put(7,0){
\begin{picture}(2,0)
\put(-3, 0){\line(1,4){0.5}} 
\put(-3, 0){\line(5,4){2.5}} 
\put(-2.5, 2){\line(1,0){2}} 
\put(-2.5, 2){\line(5,-4){2.5}}
\put(-0.5, 2){\line(1,-4){0.5}}
\put(-3,0){\line(2,0){3}}
\put(-1.65,0.5){$T_1$}
\put(-1.65,1.6){$T_3$}
\put(-2.4,1.05){$T_2$}
\put(-1,1.05){$T_4$}
\end{picture}}
\end{picture}

\end{center} 
\caption{$\hat Q$ (left) and $ Q$ (right)}\label{Q-hatQ}
\end{figure}
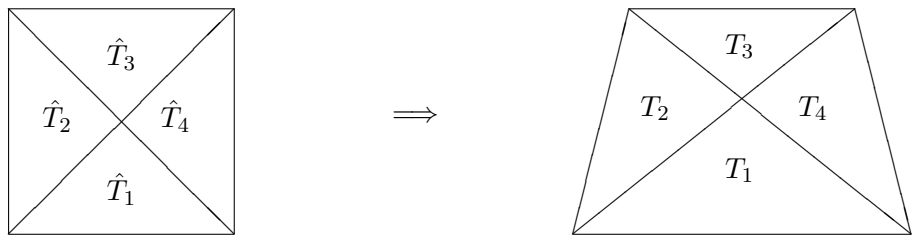

For  a 
simply-connected sub-domain $D\subset\Omega$, we adopt standard notation for Sobolev spaces such as $H^m(D)$ equipped with the norm $\left\|\cdot\right\|_{m,D}$ and the semi-norm $\left|\cdot\right|_{m,D}$. If $m=0$,  the space $H^0 (D)$ coincides with $ L^2(D)$ equipped with the norm $\|\cdot\|_{D}$, and when $D=\Omega$, we drop the subscript $D$. We use  $\bm H^m(D)$   and ${\bm L}^2(D)$ to denote the vector-valued Sobolev spaces $\left[H^m(D)\right]^2$ and $\left[L^2(D)\right]^2$.


We define
\begin{align*}
&H(\text{div};\Omega):=\{\bm u \in {\bm L}^2(\Omega):\; \div \bm u \in L^2(\Omega)\},
\end{align*}
with the scalar products and norms 
$(\bm u,\bm v)_{H(\div;\Omega)}=(\bm u,\bm v)+(\div\bm u,\div \bm v)$
and
$\left\|\bm u\right\|_{H(\div;\Omega)}=\sqrt{(\bm u,\bm u)_{H(\div;\Omega)}}.$


We use $P_k(D)$ to denote the space of polynomials on $D$ with degree of no larger than $k$ and $\bm P_k(D)=\left[P_k(D)\right]^2$. We denote $\widetilde P_k(D)$ as the space of homogeneous polynomials of order $k$.

\subsection{Discrete Eigenvalue Problem}
%

We use a supscript $k$ to denote the polynomial degree of the space $V_h$,
\begin{eqnarray*}
	V_h^{k} &=& \{ \bm u \in \bm H^1(\Omega): \bm u|_T \in \bm P_{k}(T) \text{ for any }T\in \mathcal T_h\},
\end{eqnarray*}
	and let \[W_h^{k-1}=\div V_h^{k}.\]

The finite element scheme is to find $(\bm \sigma_h, u_h, \lambda_h)$ in $V_h^k\times W_h^{k-1}\times \mathbb R$ such that
\begin{eqnarray}
\begin{split}\label{fem1}
	(\bm \sigma_h,\bm \tau_h)+(u_h,\div\bm \tau_h) &= 0 \ & \forall \bm\tau_h\in V_h^k,\\
	(\div\bm \sigma_h, v_h) &= -\lambda_h(u_h,v_h)\ &\forall v_h\in W_h^{k-1}.	
\end{split}
\end{eqnarray}
In addition to the primal formulation which is to find $(u,\lambda)\in H^1(\Omega)\times \mathbb R$ such that
\begin{eqnarray}\label{primal-v}
	(\grad u,\grad v)=\lambda(u,v) & \quad &\forall u\in H^1(\Omega), 
\end{eqnarray}
by taking $v=\div\bm\tau$ in \eqref{vf}, we can get another equivalent formulation: find $(\bm \sigma, \lambda)$ in $H(\div;\Omega)\times \mathbb R$ such that
\begin{eqnarray}\label{vf2}
	(\div\bm\sigma,\div\bm \tau)=\lambda(\bm \sigma,\bm \tau) & \quad &\forall \bm\tau\in H(\div;\Omega).
\end{eqnarray}
An equivalent scheme is to find $(\bm \sigma_h,  \lambda_h)$ in $V_h^k\times \mathbb R$ such that
\begin{eqnarray}\label{fem2}	
	(\div\bm\sigma_h,\div\bm \tau_h)=\lambda(\bm \sigma_h,\bm \tau_h) & \quad &\forall \bm\tau_h\in V_h^k.
	\end{eqnarray}
With this formulation, we can compute the numerical eigenvalues without characterizing $W_h^{k-1}.$

\subsection{Source Problem}
We will need the correponding source problem for the analysis. 
We define the solution operators $\bm A:L^2(\Omega)\rightarrow H(\div;\Omega)$ and $T:L^2(\Omega)\rightarrow L^2(\Omega)$ such that for a given $f\in L^2(\Omega)$, there holds
\begin{align}\label{source1}
\begin{split}
		(\bm Af,\bm \tau)+(Tf,\div\bm \tau) &= 0 \quad \forall \bm\tau\in H(\div;\Omega),\\
	    (\div\bm Af, v)& = -(f,v)\quad \forall v\in L^2(\Omega).
\end{split}
\end{align}

Similarly, 
we define the discrete solution operators $\bm A_h:L^2(\Omega)\rightarrow V_h^k$ and $T_h:L^2(\Omega)\rightarrow W_h^{k-1}$ such that for a given $f\in L^2(\Omega)$, there holds
\begin{align}\label{source2}
\begin{split}
		(\bm A_hf,\bm \tau)+(T_hf,\div\bm \tau) &= 0 \quad \forall \bm\tau\in V_h^k,\\
	(\div\bm A_hf, v)& = -(f,v)\quad \forall v\in W_h^{k-1}.
\end{split}
\end{align}
\begin{lemma}\label{regularity}
For $f\in L^2(\Omega)$, there exists a $\delta\in (0,1/2]$ depending on the interior angles of $\Omega$ such that $\bm Af\in \bm H^{1/2+\delta}(\Omega)$ and $Tf\in H^{3/2+\delta}(\Omega)$. Moreover,
\begin{align*}
\|\bm Af\|_{1/2+\delta}\leq C\|f\|,\\
\|Tf\|_{3/2+\delta}\leq C\|f\|.
\end{align*}
\end{lemma}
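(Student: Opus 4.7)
The plan is to recognize the source problem \eqref{source1} as the mixed formulation of the Dirichlet Poisson problem and then invoke the classical elliptic regularity theory on polygonal domains.

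First I would identify the strong form satisfied by $(\bm Af, Tf)$. Taking $\bm\tau \in C_c^\infty(\Omega)^2$ in the first equation of \eqref{source1} and integrating by parts gives $(\bm Af - \grad (Tf), \bm\tau) = 0$, so that $\bm Af = \grad (Tf)$ in the distributional sense. Substituting into the second equation of \eqref{source1} yields $\Delta (Tf) = -f$ in the distributional sense. Next I would recover the boundary condition: for arbitrary $\bm\tau \in H(\div;\Omega)$, integration by parts combined with the first equation of \eqref{source1} gives $\langle Tf, \bm\tau\cdot\bm n\rangle_{\partial\Omega} = 0$. Since the normal trace operator $\bm\tau \mapsto \bm\tau\cdot\bm n$ maps $H(\div;\Omega)$ onto $H^{-1/2}(\partial\Omega)$, this forces $Tf = 0$ on $\partial\Omega$ in $H^{1/2}(\partial\Omega)$. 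Hence $Tf \in H_0^1(\Omega)$ is the unique weak solution of the Dirichlet problem $-\Delta (Tf) = f$ in $\Omega$, $Tf = 0$ on $\partial\Omega$.

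Then I would apply the standard regularity theory of Grisvard (or Dauge) for the Dirichlet Laplacian on a polygonal Lipschitz domain. It is well known that for every right-hand side $f \in L^2(\Omega)$ there exists $\delta \in (0,1/2]$, depending only on the maximal interior angle of $\Omega$ (with $\delta = 1/2$ for convex $\Omega$, and $\delta < \pi/\omega_{\max} - 1$ determined by the largest reentrant corner), such that
\begin{equation*}
\|Tf\|_{3/2+\delta} \leq C \|f\|.
\end{equation*}
This immediately yields the second estimate. The first estimate then follows from $\bm Af = \grad (Tf)$, since $\grad$ is bounded from $H^{3/2+\delta}(\Omega)$ to $\bm H^{1/2+\delta}(\Omega)$, so that $\|\bm Af\|_{1/2+\delta} \leq \|Tf\|_{3/2+\delta} \leq C\|f\|$.

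There is essentially no hard step here: the whole argument is a translation from the mixed formulation to the primal Dirichlet problem plus a citation to known polygonal-domain regularity. The only point to state carefully is the identification of the boundary condition, which relies on the surjectivity of the normal trace from $H(\div;\Omega)$ onto $H^{-1/2}(\partial\Omega)$; once that is in place, the rest is a one-line consequence of elliptic regularity.
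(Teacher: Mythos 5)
Your proposal follows the paper's own proof essentially verbatim: reduce the mixed system \eqref{source1} to $\bm A f=\grad (Tf)$, $-\Delta (Tf)=f$, $Tf=0$ on $\partial\Omega$, invoke polygonal-domain elliptic regularity for the Dirichlet Laplacian, and bound $\bm A f$ through the gradient; your only additions are a more explicit trace argument (surjectivity of the normal trace of $H(\div;\Omega)$ onto $H^{-1/2}(\partial\Omega)$) for the boundary condition and a different standard reference. One minor slip in your parenthetical quantification of $\delta$: for a reentrant corner of maximal angle $\omega_{\max}>\pi$ the solution lies in $H^{1+s}(\Omega)$ for all $s<\pi/\omega_{\max}$, so the correct restriction is $\delta<\pi/\omega_{\max}-1/2$ rather than $\delta<\pi/\omega_{\max}-1$, but this does not affect the argument since only the existence of some $\delta\in(0,1/2]$ with the stated bound is needed.
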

\begin{proof}
	From \eqref{source1}, we can obtain $\bm A f =\grad T f$ and $-\div \bm A f = f,$ which implies $-\Delta Tf =f$. Moreover, from the first equation of \eqref{source1}, we can get $Tf=0$ on $\partial\Omega$. By the regularity estimate of the Laplace problem \cite{Monk2003}, we have $Tf\in H^{3/2+\delta}(\Omega)$ and 
	$\|Tf\|_{3/2+\delta}\leq C\|f\|$. Then $\bm A f =\grad T f \in \bm H^{1/2+\delta}(\Omega)$ and $\|\bm Af\|_{1/2+\delta}\leq \|Tf\|_{3/2+\delta}\leq C\|f\|$.
\end{proof}

From Lemma \ref{regularity}, we can see that 
\begin{eqnarray*}
	\bm A(L^2(\Omega)) = \bm H^{1/2+\delta}(\Omega)\cap H(\div;\Omega), &\ & T(L^2(\Omega)) = H^{3/2+\delta}(\Omega).
\end{eqnarray*}

\subsection{Theoretical Framework}

Define
\[V=\bm H^{1/2+\delta}(\Omega)\cap H(\div;\Omega),\]
and equip it with norm
\[\|\cdot\|_V=\Big(\|\cdot\|_{1/2+\delta}^2+\|\cdot\|_{H(\div;\Omega)}^2\Big)^{1/2}.\]
\begin{assumption}\label{assump}We assume that
\begin{itemize}
    \item[\textbf{H1.}] There exists a constant $\alpha$ such that
    \[(\bm\sigma_h,\bm \sigma_h)\geq \alpha \|\bm\sigma_h \|_{H(\div;\Omega)}^2\text{ on }\mathbb K_h,\]
     where $\mathbb K_h = \{\bm\tau_h\in V_h^{k}:(\div\bm\tau_h, v_h)=0\text{ for any }v_h\in W_h^{k-1}\}$.
	\item[\textbf{H2.}] There exists an $L^2$-orthogonal projection $P_h:L^2(\Omega)\rightarrow W_h^{k-1}$ satisfying
		\begin{align}
		\|\phi-P_h \phi\|\leq \omega_0(h)\|\grad\phi\| \quad \forall\phi\in H^1(\Omega).
	\end{align}
	\item[\textbf{H3.}] There exists a Fortin interpolation $\Pi_h:V\rightarrow V_h^k$ such that for any $\bm\tau\in V$,
	\begin{align}
		&\div \Pi_h \bm\tau = P_h\div\bm\tau,\label{com-pih}\\
		&\|\bm\tau-\Pi_h \bm\tau\|\leq \omega_1(h)\|\bm\tau\|_V.\label{approxi}
	\end{align}
\end{itemize}
	Here $\omega_i(h)>0$ and $\lim_{h\rightarrow 0^+} \omega_i(h)=0, i=0,1$.
\end{assumption}

According to \cite[Theorem 3.2]{boffi1999computational}, under Assumption \ref{assump},
\[\|T-T_h\|\rightarrow 0,\]
which implies the convergence of the discrete eigenvalues computed from \eqref{fem1} and \eqref{fem2}. 
Assumptions \textbf{H1} and \textbf{H2} are easy to verify. To construct a Fortin interpolation that satisfies \textbf{H3}, in the following section, we fit $V_h^{k}$ and $W_h^{k-1}$ into a finite element complex.

\section{A Finite Element De Rham Complex on $\mathcal Q_h$}
For $k=2,3$, we will fit finite element spaces $V_h^k$ and $W_h^{k-1}$ into a finite element Stokes complexes:
\begin{equation}\label{H2complex2-2D-fe}
\begin{tikzcd}
\!0 \arrow{r} &\mathbb{R} \arrow{r}{\subset} &\! \Sigma_h^{k+1} \! \arrow{r}{\curl} &  V_h^{k} \arrow{r}{\div} &W_h^{k-1}  \arrow{r}&0.
 \end{tikzcd}
\end{equation}

To this end, we first construct local function spaces $\Sigma_h^{k+1}(Q) $, $V_h^{k} (Q)$, and $W^{k-1}_h(Q)$  such that they form a local de Rham complex:
\begin{equation}\label{H2complex2-2D-fe-local}
\begin{tikzcd}
\!0 \arrow{r} &\mathbb{R} \arrow{r}{\subset} &\! \Sigma_h^{k+1}(Q) \! \arrow{r}{\curl} &  V_h^{k} (Q)\arrow{r}{\div} &W^{k-1}_h(Q)  \arrow{r}&0,
 \end{tikzcd}
\end{equation}
where
\[\Sigma_h^{k+1}(Q) = \{ u \in H^2(Q): u|_T \in P_{k+1}(T) \text{ for any }T\in \mathcal T_h^Q\},\]
\[V_h^{k}(Q) = \{ u \in H^1(Q): u|_T \in P_{k}(T) \text{ for any }T\in \mathcal T_h^Q\},\ \text{and }\]
\[W_h^{k-1}(Q) = \div V_h^{k}(Q).\]

We will present the DOFs for each space in the following paragraphs. The DOFs for each space are also shown in Figures \ref{localseq} and \ref{localseq2}. 

\begin{figure}[h!]
\begin{center} 
\setlength{\unitlength}{1.2cm}
\begin{picture}(5,2)(1,-0)
\put(0,0){
\begin{picture}(2,0)
\put(-3, 0){\line(1,4){0.5}} 
\put(-3, 0){\line(5,4){2.5}} 
\put(-2.5, 2){\line(1,0){2}} 
\put(-2.5, 2){\line(5,-4){2.5}}
\put(-0.5, 2){\line(1,-4){0.5}}
\put(-3,0){\line(2,0){3}}
\put(-1.5, 2){\line(0,1){0.3}} 
\put(-1.5, 0){\line(0,-1){0.3}} 
\put(-2.75, 1){\line(-4,1){0.3}} 
\put(-0.25, 1){\line(4,1){0.3}} 
\put(-3,0){\circle*{0.1}}
\put(-3,0){\circle{0.3}}
\put(-2.5,2){\circle*{0.1}}
\put(-2.5,2){\circle{0.3}}
\put(-0.5,2){\circle*{0.1}}
\put(-0.5,2){\circle{0.3}}
\put(0,0){\circle*{0.1}}
\put(0,0){\circle{0.3}}

\end{picture}
}

\put(0.5, 1){\vector(1, 0){1}}
\put(0.68, 1.15){$\curl$}

\put(5,0){
\put(-3, 0){\line(1,4){0.5}} 
\put(-3, 0){\line(5,4){2.5}} 
\put(-2.5, 2){\line(1,0){2}} 
\put(-2.5, 2){\line(5,-4){2.5}}
\put(-0.5, 2){\line(1,-4){0.5}}
\put(-3,0){\line(2,0){3}}

\put(-1.55, 2){\circle*{0.1}}
\put(-1.55, 0){\circle*{0.1}}
\put(-2.7, 1){\circle*{0.1}}
\put(-0.2, 1){\circle*{0.1}}

\put(-2.95,0){\circle*{0.1}}
\put(-2.45,2){\circle*{0.1}}
\put(-0.45,2){\circle*{0.1}}
\put(-0.05,0){\circle*{0.1}}
\put(-1.55,1.2){\circle*{0.1}}
\put(-2.05,1.6){\circle*{0.1}}
\put(-0.7,0.6){\circle*{0.1}}
\put(-2.3,0.6){\circle*{0.1}}
\put(-1.05,1.6){\circle*{0.1}}

\put(-1.45, 2.){\circle*{0.1}}
\put(-1.45, -0){\circle*{0.1}}
\put(-2.8, 1){\circle*{0.1}}
\put(-0.3, 1){\circle*{0.1}}
\put(-3.05,0){\circle*{0.1}}
\put(-2.54,2){\circle*{0.1}}
\put(-0.55,2){\circle*{0.1}}
\put(0.05,0){\circle*{0.1}}
\put(-1.45,1.2){\circle*{0.1}}
\put(-1.95,1.6){\circle*{0.1}}
\put(-0.8,0.6){\circle*{0.1}}
\put(-2.2,0.6){\circle*{0.1}}
\put(-0.95,1.6){\circle*{0.1}}
}

\put(5.5, 1){\vector(1, 0){1}}
\put(5.75, 1.1){{$\div$}}
\put(10,0){
\begin{picture}(2,2)
\put(-3, 0){\line(1,4){0.5}} 
\put(-3, 0){\line(5,4){2.5}} 
\put(-2.5, 2){\line(1,0){2}} 
\put(-2.5, 2){\line(5,-4){2.5}}
\put(-0.5, 2){\line(1,-4){0.5}}
\put(-3,0){\line(2,0){3}}
\put(-1.8,0.7){+11}
\end{picture}
}
\end{picture}
\end{center}
\caption{The finite element complex \eqref{H2complex2-2D-fe-local} with $k=2$.}
\label{localseq}
\end{figure}

\begin{figure}[h!]
\begin{center} 
\setlength{\unitlength}{1.2cm}
\begin{picture}(5,2)(1,-0)
\put(0,0){
\begin{picture}(2,0)
\put(-3, 0){\line(1,4){0.5}} 
\put(-3, 0){\line(5,4){2.5}} 
\put(-2.5, 2){\line(1,0){2}} 
\put(-2.5, 2){\line(5,-4){2.5}}
\put(-0.5, 2){\line(1,-4){0.5}}
\put(-3,0){\line(2,0){3}}
\put(-3,0){\circle*{0.1}}
\put(-3,0){\circle{0.3}}
\put(-2.5,2){\circle*{0.1}}
\put(-2.5,2){\circle{0.3}}
\put(-0.5,2){\circle*{0.1}}
\put(-0.5,2){\circle{0.3}}
\put(0,0){\circle*{0.1}}
\put(0,0){\circle{0.3}}
\put(-2.67, 1.34){\line(-4,1){0.3}} 
\put(-2.84, 0.67){\line(-4,1){0.3}} 
\put(-1.17,2){\line(0,1){0.3}} 
\put(-1.84,2){\line(0,1){0.3}} 
\put(-1.17, 0){\line(0,-1){0.3}} 
\put(-1.84, 0){\line(0,-1){0.3}} 
\put(-0.17, 0.67){\line(4,1){0.3}} 
\put(-0.34, 1.34){\line(4,1){0.3}} 
\put(-1.5, 2){\circle*{0.1}}
\put(-1.5, 0){\circle*{0.1}} 
\put(-2.75, 1){\circle*{0.1}}
\put(-0.25, 1){\circle*{0.1}}
\put(-1.8,0.7){+4}
\end{picture}
}

\put(0.5, 1){\vector(1, 0){1}}
\put(0.68, 1.15){$\curl$}

\put(5,0){
\put(-3, 0){\line(1,4){0.5}} 
\put(-3, 0){\line(5,4){2.5}} 
\put(-2.5, 2){\line(1,0){2}} 
\put(-2.5, 2){\line(5,-4){2.5}}
\put(-0.5, 2){\line(1,-4){0.5}}
\put(-3,0){\line(2,0){3}}


\put(-2.95,0){\circle*{0.1}}
\put(-2.45,2){\circle*{0.1}}
\put(-0.45,2){\circle*{0.1}}
\put(-0.05,0){\circle*{0.1}}
\put(-1.55,1.2){\circle*{0.1}}

\put(-3.05,0){\circle*{0.1}}
\put(-2.54,2){\circle*{0.1}}
\put(-0.55,2){\circle*{0.1}}
\put(0.05,0){\circle*{0.1}}
\put(-1.45,1.2){\circle*{0.1}}

\put(-2.62, 1.34){\circle*{0.1}} 
\put(-2.79, 0.67){\circle*{0.1}}
\put(-1.12,2){\circle*{0.1}}
\put(-1.79,2){\circle*{0.1}}
\put(-1.12, 0){\circle*{0.1}}
\put(-1.79, 0){\circle*{0.1}}
\put(-0.12, 0.67){\circle*{0.1}}
\put(-0.29, 1.34){\circle*{0.1}}

\put(-2.72, 1.34){\circle*{0.1}} 
\put(-2.89, 0.67){\circle*{0.1}}
\put(-1.22,2){\circle*{0.1}}
\put(-1.89,2){\circle*{0.1}}
\put(-1.22, 0){\circle*{0.1}}
\put(-1.89, 0){\circle*{0.1}}
\put(-0.22, 0.67){\circle*{0.1}}
\put(-0.39, 1.34){\circle*{0.1}}

\put(-1.78,1.46){\circle*{0.1}}
\put(-1.88,1.46){\circle*{0.1}}
\put(-1.12,1.46){\circle*{0.1}}
\put(-1.22,1.46){\circle*{0.1}}
\put(-2.11,1.72){\circle*{0.1}}
\put(-2.21,1.72){\circle*{0.1}}
\put(-0.88,1.72){\circle*{0.1}}
\put(-0.78,1.72){\circle*{0.1}}

\put(-2.55,0.4){\circle*{0.1}}
\put(-0.55,0.4){\circle*{0.1}}
\put(-2.05,0.8){\circle*{0.1}}
\put(-1.05,0.8){\circle*{0.1}}

\put(-2.45,0.4){\circle*{0.1}}
\put(-0.45,0.4){\circle*{0.1}}
\put(-1.95,0.8){\circle*{0.1}}
\put(-0.95,0.8){\circle*{0.1}}

\put(-2.5,1.0){+2}
\put(-1,1.0){+2}
\put(-1.75,0.5){+2}
\put(-1.75,1.5){+2}
}

\put(5.5, 1){\vector(1, 0){1}}
\put(5.75, 1.1){{$\div$}}
\put(10,0){
\begin{picture}(2,2)
\put(-3, 0){\line(1,4){0.5}} 
\put(-3, 0){\line(5,4){2.5}} 
\put(-2.5, 2){\line(1,0){2}} 
\put(-2.5, 2){\line(5,-4){2.5}}
\put(-0.5, 2){\line(1,-4){0.5}}
\put(-3,0){\line(2,0){3}}
\put(-1.8,0.7){+23}
\end{picture}
}
\end{picture}
\end{center}
\caption{The finite element complex \eqref{H2complex2-2D-fe-local} with $k=3$.}
\label{localseq2}
\end{figure}

%
%

\paragraph{\textbf{Space $\Sigma_h^{k+1}(Q)$}}
A function $ u \in \Sigma_h^{k+1}(Q)$ can be determined by 
\begin{itemize}
	\item $u(v),\ \partial_{x}u(v),\ \partial_{y}u(v)$ for all  $v\in\mathcal V_h(Q)$;
	\item $\int_{e}\partial_{n_{e}} u q\d s$ for all $q=\hat q\circ F_Q^{-1}$ with $\hat q\in P_{k-2}(\hat e)$ and $e\in\mathcal E_h(Q)$;
	\item $\frac{1}{|e|}\int_{e} u q\d s$ for all $q=\hat q\circ F_Q^{-1}$ with $\hat q\in P_{k-3}(\hat e)$  and $e\in\mathcal E_h(Q)$;
	\item $\frac{1}{|T|}\int_{T} u q\d A$ for all $q=\hat q\circ F_Q^{-1}$ with $\hat q\in P_{k-4}(\hat T)$ and $T\in\mathcal T_h^Q$.
	\end{itemize}

\begin{lemma}
	The above DOFs for space $\Sigma_h^{k+1}(Q)$ is unisolvent and $H^2$-conforming.
	
\end{lemma}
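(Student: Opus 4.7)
The plan is to establish the result in two steps: first $H^2$-conformity under patching, then unisolvence by matching the DOF count with $\dim\Sigma_h^{k+1}(Q)$ and verifying injectivity.

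For conformity, I would show that on each edge $e\in\mathcal E_h(Q)$ the DOFs attached to $e$ and to its two endpoints uniquely determine both $u|_e\in P_{k+1}(e)$ (a 1D polynomial of dimension $k+2$) and $\partial_{n_e}u|_e\in P_k(e)$ (of dimension $k+1$). Indeed, the two endpoint values together with the two endpoint tangential derivatives (read off from the full gradient at each vertex) supply $4$ Hermite data for $u|_e$, while the moments against $P_{k-3}(\hat e)$ supply the remaining $k-2$; similarly, the two endpoint normal derivatives plus the moments against $P_{k-2}(\hat e)$ give $k+1$ data for $\partial_{n_e}u|_e$. Hence across any edge shared by two elements of $\mathcal Q_h$, both the trace and the normal trace match, yielding $H^2(\Omega)$-conformity of the global assembly.

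For unisolvence, counting yields $12+4(k-1)+4(k-2)+2(k-2)(k-3)$ DOFs, i.e., $16$ for $k=2$ and $24$ for $k=3$. These agree with $\dim\Sigma_h^{k+1}(Q)$ as computed from the classical spline dimension formula on the criss-cross split \cite{lai2007spline}, accounting for the extra degree of freedom contributed by the singular (two-direction) central vertex. Granted this matching, it suffices to check injectivity: if all DOFs vanish, then $u\equiv 0$. The conformity step already yields $u|_{\partial Q}=0$ and $\partial_n u|_{\partial Q}=0$, so on each boundary triangle $T_i\in\mathcal T_h^Q$ one can factor $u|_{T_i}=L_i^2 p_i$, where $L_i$ is the affine function vanishing on the unique edge of $T_i$ lying on $\partial Q$ and $p_i\in P_{k-1}(T_i)$. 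The remaining task is to exploit the $C^1$ continuity across the four half-diagonals together with single-valuedness of $u$ and $\nabla u$ at the center of $Q$ to force $p_i\equiv 0$. For $k=2,3$ the interior-triangle moments (with test space $P_{k-4}$) are empty, so no further internal DOFs are available and all information must come from these interface conditions. The resulting homogeneous linear system has $4\binom{k+1}{2}$ unknowns ($12$ for $k=2$, $24$ for $k=3$), and a direct calculation, exploiting $L_i(\text{center})\neq 0$, verifies that its only solution is trivial.

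I expect the main obstacle to be the careful bookkeeping at the singular central vertex, where the four half-diagonals meet in only two distinct directions: the $C^1$ constraints must be counted with care to avoid double-counting and to correctly reflect the extra singular-vertex dimension typical of spline theory on the criss-cross. A cleaner alternative would be to establish the local exact sequence \eqref{H2complex2-2D-fe-local} by independently pinning down $\dim V_h^k(Q)$ and $\dim W_h^{k-1}(Q)$, whence $\dim\Sigma_h^{k+1}(Q)$ follows via Euler characteristic; this sidesteps the singular-vertex analysis but reverses the logical order used in the remainder of the paper.
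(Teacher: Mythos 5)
Your conformity argument (edge trace determined by Hermite data plus the $P_{k-3}(\hat e)$ moments, normal trace by endpoint gradients plus the $P_{k-2}(\hat e)$ moments) is fine. The genuine gap is in the unisolvence step, and it is concentrated at $k=3$. Your count of the printed DOFs ($16$ for $k=2$, $24$ for $k=3$) is correct, but the claim that $24$ equals $\dim\Sigma_h^{4}(Q)$ is false: the $C^1$ piecewise quartic space on the criss-cross split of $Q$ has dimension $28$. You can see this without consulting spline tables, using only ingredients from this paper: the argument of Lemma \ref{exactnessQ} shows $\ker(\div)\cap V_h^3(Q)=\curl\,\Sigma_h^{4}(Q)$, while $\div V_h^3(Q)$ lies in the $23$-dimensional subspace of piecewise quadratics satisfying the alternating-sum condition at the singular centre \cite{scott1985norm}; since $\dim V_h^3(Q)=50$, rank--nullity gives $\dim\Sigma_h^{4}(Q)\ge 1+50-23=28$ (Schumaker's lower bound with the singular-vertex correction $\sigma=1$ gives the same number, and it is attained here). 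Consequently the homogeneous system in your injectivity check has a four-dimensional kernel, namely $\Sigma_h^{4}(Q)\cap H_0^2(Q)$ --- precisely the space the paper uses as interior DOFs for $V_h^{3}(Q)$, and whose dimension must be $4$ for that DOF count to reach $50$ --- so no direct calculation can show the system is nonsingular. The root cause is the printed interior moments against $P_{k-4}(\hat T)$, which are empty for $k=3$; the paper's figure for $k=3$ shows four additional interior DOFs (i.e.\ moments against $P_{k-3}(\hat T)$), and only with those added does the count become $28$ and your dimension-matching strategy become viable.

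Even for $k=2$, where $16$ does match the dimension (the Fraeijs de Veubeke--Sander configuration), your proposal stops short of a proof: the decisive computation --- eliminating the Bézier/Taylor data of the $p_i$ using $C^1$ continuity across the four half-diagonals that span only two directions at the centre --- is exactly the part you defer, and it is where the singular-vertex subtlety lives. The paper takes a different and essentially citation-based route: it appeals to the Bernstein--Bézier analysis of \cite[Theorem 6.19]{lai2007spline} for the cubic case (with edge-midpoint normal derivatives in place of edge moments) and asserts the remaining cases follow by the same technique. If you wish to keep your direct approach, you must (i) correct the interior DOFs and the count for $k=3$, and (ii) actually carry out the central-vertex elimination (or replace it with the Bernstein--Bézier smoothness conditions), rather than assert it; your suggested fallback via the exact sequence cannot be used here, since the paper derives the exactness and the characterization of $W_h^{k-1}(Q)$ from this lemma, not the other way around.
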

\begin{proof}
	The unisolvence and $H^2$-conformity of $\Sigma_h^{3}(Q)$ with the boundary DOFs $\int_{e_i}\partial_{n_{i}} u\d s$ replaced by $\partial_{n_{i}} u(M_{e_i})$, where $M_{e_i}$ is the midpoint of $e_i$, can be found in \cite[Theorem 6.19]{lai2007spline}. The proof is based on the Bernstein-B\'ezier technique. 
		The unisolvence and $H^2$-conformity of the above DOFs for $k=2,3$ can be proved similarly.
\end{proof}

\paragraph{\textbf{Space $V_h^{k}(Q)$}}
Space $V_h^{k}(Q)$ is the Lagrange finite element space on $\mathcal T_h^Q$.
A function $\bm u \in V_h^{k}(Q)$ can be determined by 
\begin{itemize}
	\item $\bm u(v)$ for all  $v\in\mathcal V_h(Q)$;
	\item $\int_{e}\bm u \cdot \bm q\d s \text{ for all } \bm q\in [P_{k-2}(e)]^2$ and   $e\in\mathcal E_h(Q)$;
	\item $\int_{Q}\div\bm u \cdot q\d A \text{ for all } q\in W_h^{k-1}(Q)\cap L_0^2(Q)$;
	\item $\int_{Q}\bm u \cdot \curl q\d A \text{ for all } q\in \Sigma_h^{k+1}(Q)\cap H_0^2(Q)$.
\end{itemize}
\begin{lemma}
	The above DOFs for $V_h^{k}(Q)$ are unisolvent and $H^1$-conforming. 
\end{lemma}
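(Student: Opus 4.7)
My plan is to prove unisolvence by the standard two-step argument—matching dimensions, then verifying injectivity of the DOF map on $V_h^{k}(Q)$. The $H^1$-conformity then follows immediately from the inclusion $V_h^{k}(Q)\subset H^1(Q)$ built into the definition of the space, together with the observation that the vertex and boundary-edge DOFs pin down $\bm u|_{\partial Q}$ uniquely, so that global continuity across $\mathcal Q_h$ is controlled by interfacial DOFs alone.

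For the dimension count I would enumerate the proposed DOFs: $8$ corner values, $8(k-1)$ boundary-edge moments on the four edges of $Q$, $\dim W_h^{k-1}(Q)-1$ divergence moments, and $\dim(\Sigma_h^{k+1}(Q)\cap H_0^2(Q))$ curl moments, and compare with $\dim V_h^{k}(Q)=2\dim(\text{scalar }P_k\text{ Lagrange on }\mathcal T_h^Q)$. Via the surjectivity of $\div:V_h^{k}(Q)\to W_h^{k-1}(Q)$ (which holds by definition of $W_h^{k-1}(Q)$), the identity reduces to $\ker(\div)\cap V_h^{k}(Q)\cap\bm H_0^1(Q)=\curl(\Sigma_h^{k+1}(Q)\cap H_0^2(Q))$, an identity that will be proved as a by-product of the injectivity step below.

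For injectivity, assume that all DOFs of $\bm u\in V_h^{k}(Q)$ vanish. Stage one: on each boundary edge $e$ of $Q$, the two endpoint values together with the moments against $[P_{k-2}(e)]^2$ form a unisolvent set for $\bm P_k(e)$, so $\bm u=0$ on $\partial Q$. Stage two: $\div\bm u\in W_h^{k-1}(Q)$ by construction, and Green's identity gives $\int_Q\div\bm u\,dA=\int_{\partial Q}\bm u\cdot\bm n\,ds=0$, so $\div\bm u\in W_h^{k-1}(Q)\cap L_0^2(Q)$; testing the divergence DOF with $q=\div\bm u$ yields $\|\div\bm u\|^2=0$, hence $\div\bm u=0$. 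Stage three: since $Q$ is simply connected and $\bm u$ is divergence-free with $\bm u|_{\partial Q}=0$, a stream function $\phi$ with $\bm u=\curl\phi$ exists; the identity $\grad\phi=(-u_2,u_1)^{\top}$ vanishes on $\partial Q$, so we may normalize $\phi\in H_0^2(Q)$. On each $T\in\mathcal T_h^Q$, the relation $\curl\phi=\bm u|_T\in\bm P_k(T)$ forces $\phi|_T\in P_{k+1}(T)$, and the continuity of $\grad\phi$ across interior edges (a consequence of $\bm u\in\bm H^1$) yields $\phi\in C^1(Q)$, so $\phi\in\Sigma_h^{k+1}(Q)\cap H_0^2(Q)$. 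Testing the curl DOF with $q=\phi$ then gives $\|\curl\phi\|^2=0$, whence $\bm u=0$.

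The main obstacle I anticipate is the discrete stream function step: one must justify not only $\phi\in H_0^2(Q)$, but that $\phi$ belongs to the explicit space $\Sigma_h^{k+1}(Q)$, i.e., is piecewise $P_{k+1}$ and globally $C^1$ on $Q$. This smoothness assembly is precisely what ties the volumetric curl DOFs to the $H^2$-conforming space defined above, and the same construction closes the dimension bookkeeping by identifying the divergence-free subspace of $V_h^k(Q)\cap\bm H_0^1(Q)$ with $\curl(\Sigma_h^{k+1}(Q)\cap H_0^2(Q))$.
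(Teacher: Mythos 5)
Your proof is correct and follows essentially the same route as the paper: vanishing vertex and edge DOFs force $\bm u|_{\partial Q}=0$, the divergence moments together with Green's identity give $\div\bm u=0$, and then a piecewise-$P_{k+1}$ stream function lying in $\Sigma_h^{k+1}(Q)\cap H_0^2(Q)$, tested against the curl DOFs, yields $\bm u=0$, with $H^1$-conformity read off from the vertex and edge DOFs. The only difference is your added sketch of the DOF-versus-dimension count, which the paper leaves implicit; be aware that closing it requires slightly more than the kernel identity you cite (one must also account for $\dim W_h^{k-1}(Q)$ and $\dim\bigl(\Sigma_h^{k+1}(Q)\cap H_0^2(Q)\bigr)$, since surjectivity of $\div$ on all of $V_h^{k}(Q)$ alone does not finish the bookkeeping), but this is a side remark rather than a flaw in the main argument.
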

\begin{proof}
	The $H^1$-conformity can be easily proved by the vertex and edge DOFs. Now to show the unisolvence, we suppose that the above DOFs vanish on $\bm u\in V_h^{k}(Q)$ and prove that $\bm u=0.$ From integration by parts and the edge DOFs, we have
	\[\int_{Q}\div\bm u \d A = \int_{\partial Q}\bm u \cdot \bm n_{Q}\d s = 0, \]
	which together with the interior DOFs $\int_{Q}\div\bm u \cdot q\d A \text{ for any } q\in W_h^{k-1}(Q)\cap L_0^2(Q)$ yields
	\[\div\bm u  =0 \text{ in }Q.\]
	Then for $T\in\mathcal T_h^Q$, there exists a function $\phi_T\in P_{k+1}(T)$ such that $\bm u|_T= \curl \phi_T$. Since $\bm u =0$ on $\partial Q$, we have the tangential derivative of $\phi_T$ along  $\partial T\cap\partial Q$ is 0. Without loss of generality, we can choose $\phi_T$ such that $\phi_T =0$ on $\partial T\cap\partial Q$. Define a function $\phi$ such that $\phi|_T=\phi_T$, then $\phi = 0$ on $\partial Q$. Moreover, we have $\phi\in H_0^2(Q)$  since $\bm u\in \bm H_0^1(Q)$. Take $q=\phi$ in the last category of DOFs for $V_h^{k}(Q)$, we have $\curl\phi=0$ and hence $\bm u=0$.
\end{proof}

\begin{remark} Since space $V_h^{k}(Q)$ is the Lagrange finite element space on $\mathcal T_h^Q$, a function $\bm u \in V_h^{k}(Q)$ can also be determined by
\begin{itemize}
	\item $\bm u(v)$ for all  $v\in\mathcal V_h(Q)$;
	\item $\int_{e}\bm u \cdot \bm q d s$, for all $\bm q\in [P_{k-2}(e)]^2$ and $e\in\mathcal E_h(Q)$;
	\item $\bm u(v_0)$ ($v_0$ is the intersection of the two diagonals of $Q$);
	\item $\int_{e_i^0}\bm u\cdot\bm q\d s$, for all $\bm q\in [P_{k-2}(e_i^0)]^2$, $ i = 1, 2, 3, 4$ ($e_i^0$, $i=1,2,3,4$ are the four interior edges in $Q$);
	\item $\int_{T} \bm u \cdot\bm q\d A$ for all $\bm q\in P_{k-3}(T)$ and $T\in\mathcal T_h^Q$.
\end{itemize}
	
\end{remark}
\begin{lemma}\label{exactnessQ}
	The sequence \eqref{H2complex2-2D-fe-local} is a complex and is exact. 
\end{lemma}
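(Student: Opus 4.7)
The plan is to verify first that \eqref{H2complex2-2D-fe-local} is a complex, and then to establish exactness at each node using the simple connectedness of $Q$ together with the piecewise polynomial structure of the spaces.

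For the complex property, I would record three observations. First, since $\Sigma_h^{k+1}(Q) \subset H^2(Q)$, we have $\curl u \in \bm H^1(Q)$ for any $u \in \Sigma_h^{k+1}(Q)$, and the piecewise $P_{k+1}$ structure gives $\curl u|_T \in \bm P_k(T)$, so that $\curl \Sigma_h^{k+1}(Q) \subset V_h^k(Q)$. Second, $\div\,\circ\,\curl = 0$ is automatic from the equality of mixed partials. Third, the inclusion $\div V_h^k(Q) \subset W_h^{k-1}(Q)$ is an equality by definition of $W_h^{k-1}(Q)$, which simultaneously gives surjectivity at the last node. Injectivity of $\mathbb{R} \hookrightarrow \Sigma_h^{k+1}(Q)$ is trivial.

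The key step is exactness at $V_h^k(Q)$. Given $\bm u \in V_h^k(Q)$ with $\div \bm u = 0$, I would invoke the exact continuous de Rham sequence on the simply connected domain $Q$ to produce a scalar potential $\phi \in H^1(Q)$ (unique up to an additive constant) with $\bm u = \curl \phi$. Since $\bm u \in \bm H^1(Q)$, both $\partial_x \phi$ and $\partial_y \phi$ lie in $H^1(Q)$, so $\phi \in H^2(Q)$. Restricted to each $T \in \mathcal T_h^Q$, locally inverting $\curl$ on a $\bm P_k$ vector yields a potential in $P_{k+1}(T)$ up to a constant, and the connectedness of $Q$ allows me to pin down these constants consistently, so that $\phi \in \Sigma_h^{k+1}(Q)$. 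Exactness at $\Sigma_h^{k+1}(Q)$ follows similarly: if $u \in \Sigma_h^{k+1}(Q)$ has $\curl u = 0$, then $\grad u = 0$ on the connected $Q$, forcing $u \in \mathbb R$.

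The main obstacle I anticipate is the careful verification that the abstract potential $\phi$ furnished by the continuous de Rham argument actually lies in the discrete space $\Sigma_h^{k+1}(Q)$, with both the correct piecewise polynomial degree and the global $H^2$-conformity; the polynomial degree comes from the local inversion of $\curl$, while the $H^2$-regularity is inherited from the $\bm H^1$-regularity of $\bm u$. As a cross-check I would confirm the alternating dimension identity
\[
-\dim \mathbb{R} + \dim \Sigma_h^{k+1}(Q) - \dim V_h^k(Q) + \dim W_h^{k-1}(Q) = 0
\]
against the DOF counts displayed in Figures \ref{localseq} and \ref{localseq2}; this, combined with surjectivity of $\div$ onto $W_h^{k-1}(Q)$ and injectivity of the initial inclusion, gives an alternative route to exactness that bypasses the explicit construction of the potential.
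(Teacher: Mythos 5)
Your proof is correct, and it reaches the one nontrivial point---exactness at $V_h^k(Q)$---by a route that differs from the paper's in the lemma it invokes. The paper argues discretely: since $V_h^k(Q)$ is a subspace of the Brezzi--Douglas--Marini space, the exactness of the standard discrete de Rham complex immediately produces a potential $\phi$ in the continuous Lagrange space of degree $k+1$ with $\bm u=\curl\phi$, and the only remaining check is smoothness, namely $\phi\in H^2(Q)$ because $\curl\phi=\bm u\in\bm H^1(Q)$. You instead argue continuously: simple connectedness of $Q$ gives $\phi\in H^1(Q)$, the $\bm H^1$-regularity of $\bm u$ upgrades it to $H^2(Q)$, and the piecewise $P_{k+1}$ structure is recovered by integrating the polynomial field $\bm u|_T$ triangle by triangle. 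Both are sound; the paper outsources the polynomial bookkeeping to a known discrete exactness result, while your argument is more self-contained but must verify the local degree explicitly (which works: on each $T$ the rotated field is curl-free and polynomial, hence admits a $P_{k+1}$ potential differing from $\phi|_T$ by a constant). Two small remarks. First, since you already hold a single global $\phi\in H^2(Q)$, there is nothing to ``pin down consistently''---the restrictions $\phi|_T$ are automatically compatible, so that step of your write-up is superfluous rather than wrong. Second, the dimension-count cross-check is circular as stated: $\dim W_h^{k-1}(Q)$ is not known a priori (the paper in fact deduces it from Lemma \ref{exactnessQ}), so the alternating-sum identity cannot serve as an independent verification of exactness at $V_h^k(Q)$ unless you first compute $\dim W_h^{k-1}(Q)$ by other means.
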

\begin{proof}
	From the construction, we have that the sequence is a complex. It suffices to show the exactness. The exactness at $W_h^{k-1}(Q)$ and $\Sigma_h^{k+1}(Q)$ is trivial. To show the exactness at $V_h^k(Q)$, we suppose $\bm u\in V_h^k(Q)$ satisfying $\div\bm u=0$ and show that $\bm u=\curl \phi$ for some $\phi$ in $\Sigma_h^{k+1}(Q)$. Since $V_h^k(Q)$ is a subspace of the Brezzi-Douglas-Marini finite element space, we have, from the exactness of the de Rham complex, 
	\[\bm u=\curl \phi,\] 
	for some $\phi$ in the Lagrange finite element space of order $k+1$. Due to the fact that $V_h^k(Q)\subset \bm H^1(Q)$, we can conclude that
	$\phi\in \Sigma_h^{k+1}(Q)$.
\end{proof}

\paragraph{\textbf{Characterization of Space $W_h^{k-1}(Q)$}}
To use the mixed formulation \eqref{fem1}, we now give a characterization of $W_h^{k-1}(Q)$. Define 
\begin{align*}
	P_h^{k-1}(Q) &= \{ u \in L^2(Q): u|_T \in P_{k-1}(T) \text{ for any }T\in \mathcal T_h^Q\},
\end{align*}
then $W_h^{k-1}(Q)$ is a subspace of $P_h^{k-1}(Q)$.

From Lemma \ref{exactnessQ}, we have
\[\dim W_h^{k-1}(Q)=1-\dim \Sigma_h^{k+1}(Q)+\dim V_h^k(Q) = \dim P_h^{k-1}(Q) -1.\]
Therefore, to characterize $W_h^{k-1}(Q)$, we need to identify a function which is in $P_h^{k-1}(Q)$ but not in $W_h^{k-1}(Q)$.

We first characterize $W_h^{k-1}(\hat Q)$. According to \cite[Proposition 2.1]{scott1985norm}, for $\hat u\in W_h^{k-1}(\hat Q)$, the following condition holds
\[\hat u|_{\hat T_1}(0,0)+\hat u|_{\hat T_3}(0,0) = \hat u|_{\hat T_2}(0,0)+\hat u|_{\hat T_4}(0,0), \]
which imposes a restriction on the constant term of $\hat u$. Therefore, the piecewise constant function shown in Figure \ref{PQ} does not belong to $W_h^{k-1}(\hat Q)$.

%
Define
\[\widetilde P_h^{k-1}(Q) = \{ u \in L^2(Q): u|_T \in \widetilde P_{k-1}(T) \text{ for any }T\in \mathcal T_h^Q\}.\]
We can deduce that 
the space $W_h^{1}(\hat Q)$ is spanned by the functions in Figure \ref{WH1} and in $\widetilde P_h^{1}(\hat Q)$, and the space $W_h^{2}(\hat Q)$ is spanned by the functions in Figure \ref{WH1} and in $\widetilde P_h^1(\hat Q)+\widetilde P_h^2(\hat Q)$. 

We then characterize $W_h^{k-1}(Q)$ as
\[W_h^{k-1}(Q)=\{q:  q=\hat q\circ F_Q^{-1}\text{ for }\hat q \in  W_h^{k-1}(\hat Q)\}.\]

\begin{figure}[h!]
\vspace{1cm}
\begin{center}
\setlength{\unitlength}{1.2cm}
\begin{picture}(5,2)(1.5,-0)
\put(5.3,0){
\begin{picture}(2,0)
\put(-3, 0){\line(0,3){3}} 
\put(-3, 3){\line(3,0){3}} 
\put(-3,0){\line(3,0){3}}
\put(0,0){\line(0,3){3}}
\put(-3,0){\line(1,1){3}}
\put(-3,3){\line(1,-1){3}}
\put(-1.8,0.4){$-1$}
\put(-1.8,2.3){$-1$}
\put(-2.5,1.4){$1$}
\put(-0.7,1.4){$1$}
\end{picture}}
\end{picture}
\caption{A piecewise constant function not in $W_h^{k-1}(\hat Q)$} \label{PQ}
\end{center}
\end{figure}
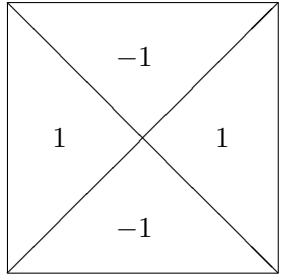

\begin{figure}[h!]
\vspace{1.5cm}
\begin{center} 
\setlength{\unitlength}{1.2cm}

\begin{picture}(5,2)(-0.3,-0)
\put(0,0){
\begin{picture}(2,0)
\put(-3, 0){\line(0,2){3}} 
\put(-3, 3){\line(2,0){3}} 
\put(-3,0){\line(2,0){3}}
\put(-0,0){\line(0,2){3}}
\put(-3,0){\line(1,1){3}}
\put(-3,3){\line(1,-1){3}}
\put(-1.55,0.8){$0$}
\put(-1.55,2){$0$}
\put(-1,1.5){$-1$}
\put(-2.3,1.5){$1$}
\end{picture}}
\put(3.5,0){
\begin{picture}(2,0)
\put(-3, 0){\line(0,2){3}} 
\put(-3, 3){\line(2,0){3}} 
\put(-3,0){\line(2,0){3}}
\put(-0,0){\line(0,2){3}}
\put(-3,0){\line(1,1){3}}
\put(-3,3){\line(1,-1){3}}
\put(-1.7,0.8){$-1$}
\put(-1.55,2){$1$}
\put(-1,1.5){$0$}
\put(-2.3,1.5){$0$}
\end{picture}}
\put(7,0){
\begin{picture}(2,0)
\put(-3, 0){\line(0,2){3}} 
\put(-3, 3){\line(2,0){3}} 
\put(-3,0){\line(2,0){3}}
\put(-0,0){\line(0,2){3}}
\put(-3,0){\line(1,1){3}}
\put(-3,3){\line(1,-1){3}}
\put(-1.55,0.8){$1$}
\put(-1.55,2){$1$}
\put(-1,1.5){$1$}
\put(-2.3,1.5){$1$}\end{picture}}
\end{picture}
\caption{Piecewise constant functions in $W_h^{k-1}(\hat Q)$.}\label{WH1}
\end{center} 

\end{figure}
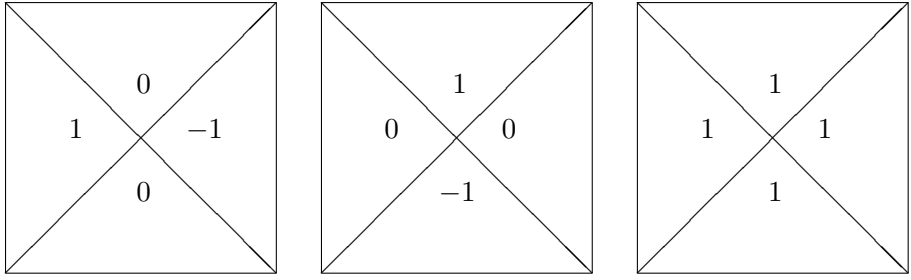

We glue together $\Sigma_h^{k+1}(Q)$ for $Q\in\mathcal Q_h$ by the DOFs for $\Sigma_h^{k+1}(Q)$ to define a global finite element space $\Sigma_h^{k+1}$. Similarly, we can define $V_h^k$ and $W_h^{k-1}$. 
\begin{lemma}
	The sequence \eqref{H2complex2-2D-fe} is an exact complex on contractible domains.
\end{lemma}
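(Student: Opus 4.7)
\medskip

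\textbf{Proof proposal.} The plan is to check each arrow separately. The complex property is immediate from calculus: $\curl\circ\text{inclusion}=0$ since the curl of a constant vanishes, and $\div\circ\curl\equiv 0$ holds pointwise because $\Sigma_h^{k+1}\subset H^2(\Omega)$. Exactness at $\mathbb{R}$ is trivial, and exactness at $W_h^{k-1}$ is built into the definition $W_h^{k-1}=\div V_h^k$. Exactness at $\Sigma_h^{k+1}$ also costs nothing: if $\phi\in\Sigma_h^{k+1}$ satisfies $\curl\phi=0$, then $\nabla\phi=0$ weakly on the connected domain $\Omega$, hence $\phi$ is a constant.

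The only substantive step is exactness at $V_h^k$. I would bootstrap off the continuous smooth Stokes complex, which is exact on a simply connected Lipschitz domain:
\begin{equation*}
0\to\mathbb{R}\to H^2(\Omega)\xrightarrow{\curl}\bm H^1(\Omega)\xrightarrow{\div}L^2(\Omega)\to 0.
\end{equation*}
Given $\bm u\in V_h^k\subset\bm H^1(\Omega)$ with $\div\bm u=0$, continuous exactness yields some $\phi\in H^2(\Omega)$ with $\bm u=\curl\phi$. It remains to show $\phi\in\Sigma_h^{k+1}$, i.e.\ that $\phi|_T\in P_{k+1}(T)$ for every $T\in\mathcal{T}_h$. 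But on each triangle the identity $\curl\phi|_T=\bm u|_T\in\bm P_k(T)$ forces $\nabla\phi|_T\in\bm P_k(T)$, which integrates to $\phi|_T\in P_{k+1}(T)$; the piecewise additive constants are pinned down (up to a single global constant that we may drop) by the $H^2$-continuity already inherited from $\phi\in H^2(\Omega)$. Combining $\phi\in H^2(\Omega)$ with the local polynomial degree gives $\phi\in\Sigma_h^{k+1}$, as required.

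An equivalent route avoiding the continuous stream-function result would be a dimension count: assemble $\dim\Sigma_h^{k+1}$, $\dim V_h^k$, and $\dim W_h^{k-1}$ globally from Lemma \ref{exactnessQ} on each macro-cell $Q\in\mathcal{Q}_h$ together with the gluing DOFs, verify $\dim V_h^k - \dim\curl(\Sigma_h^{k+1}) = \dim\div V_h^k$ and compare with the already-established complex and partial exactness. I expect the direct stream-function argument above to be considerably shorter, since Lemma \ref{exactnessQ} (local exactness on each $Q$) together with $H^2$-continuity of $\phi$ across macro-cell boundaries does all the heavy lifting, and the contractibility of $\Omega$ is used only to invoke the classical Stokes exactness at the continuous level.

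The main (and essentially only) obstacle is the polynomial-degree bookkeeping at the end of the stream-function step: one must make sure that the $\phi\in H^2(\Omega)$ supplied by continuous exactness is genuinely piecewise $P_{k+1}$ and not merely piecewise smooth. This is settled by the local argument $\curl\phi|_T\in\bm P_k(T)\Rightarrow\phi|_T\in P_{k+1}(T)$ recalled above, which is elementary since $\curl$ is first order.
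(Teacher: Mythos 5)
Your argument is correct, but it takes a genuinely different route from the paper. The paper establishes exactness at $V_h^k$ purely combinatorially: it computes $\dim\Sigma_h^{k+1}$ and $\dim V_h^k$ from the DOF counts, uses the already-proved local exactness on each macro-cell only implicitly through $\dim W_h^{k-1}$, and concludes via Euler's formula $1-\mathcal V+\mathcal E-\mathcal Q=0$; the topological hypothesis enters through that formula rather than through the continuous complex. You instead invoke exactness of the continuous Stokes complex on a simply connected domain to produce a stream function $\phi\in H^2(\Omega)$ and then downgrade it to the discrete space by the local observation $\curl\phi|_T\in\bm P_k(T)\Rightarrow\phi|_T\in P_{k+1}(T)$ — exactly the mechanism the paper itself uses, but only locally, in Lemma \ref{exactnessQ}. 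Your route is shorter and avoids the dimension bookkeeping, but it silently uses one identification the paper's gluing construction does not hand you for free: $\Sigma_h^{k+1}$ is defined by gluing the local spaces through the listed DOFs, so to place your $\phi$ in it you must check that every function in $H^2(\Omega)$ that is piecewise $P_{k+1}$ on $\mathcal T_h$ has single-valued DOFs across macro-cell interfaces (vertex values and vertex gradients are single-valued because the piecewise-polynomial gradient lies in $\bm H^1$ and its edge traces are polynomials, and the edge moments depend only on traces), i.e.\ that the DOFs encode $H^2$-conformity and nothing more. That verification is routine but should be stated; with it, your proof is complete, needs only simple connectivity of $\Omega$ at the continuous level, and arguably explains the exactness more structurally than the Euler-characteristic count, whereas the paper's count has the advantage of not requiring the continuous stream-function regularity $\phi\in H^2$ nor the identification of the glued space.
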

\begin{proof}
	From the construction, we have the sequence is a complex. It suffies to show the exactness. The exactness at $W_h^{k-1}$ and $\Sigma_h^{k+1}$ is trivial. To show the exactness at $V_h^k$, we use dimension count.  The dimensions of $\Sigma_h^{k+1}$ and $V_h^k$ are respectively
	\begin{align*}
	\begin{split}
		&\dim\Sigma_h^{k+1}=3\mathcal V+(2k-3)\mathcal E+4(k-2)\mathcal Q,\\
		&\dim V_h^k = 2\mathcal V+(2k-2)\mathcal E+\dim W_h^{k-1}-\mathcal Q+4(k-2)\mathcal Q,
	\end{split}
	\end{align*}
	where $\mathcal V$, $\mathcal E$, $\mathcal Q$ are the number of vertices, edges and  convex quadrilaterals in $\mathcal Q_h$.
Then we have 
\[1-\dim \Sigma_h^{k+1}+\dim V_h^k-\dim W_h^{k-1}=1-\mathcal V+\mathcal E-\mathcal Q = 0,\]
from the Euler's formula. 	This completes the proof. 
\end{proof}

\section{The Construction of Fortin Interpolation}\label{sec4}
In this section, we construct a Fortin interpolation $\Pi_h$ that satisfies \textbf{H3} in Assumption \ref{assump}. To this end, we first define
 an interpolation $\Pi^0_h:V \rightarrow V_h^k$ whose restriction on $Q$ is denoted by $\Pi^0_Q$. For $\bm u\in V$, $\Pi_Q^0\bm u$ is uniquely determined by
\begin{itemize}
	\item $\Pi_Q^0\bm u(v) = 0$ for any $v\in \mathcal V_h(Q)$;
    \item $\int_{e}\Pi_Q^0\bm u \cdot \bm \tau_e q\d s = 0 \text{ for any } q\in P_{k-2}(e)\text{ and }e\in \mathcal E_h(Q)$;
	\item $\int_{e}\Pi_Q^0\bm u \cdot \bm n_e q\d s = \int_{e}\bm u \cdot \bm n_e q\d s \text{ for any } q\in P_{k-2}(e)\text{ and }e\in \mathcal E_h(Q)$;
	\item $\int_{Q}\div\Pi_Q^0\bm u \cdot q\d A = \int_{Q}\div\bm u \cdot q\d A\text{ for any } q\in W_h^{k-1}(Q)\cap L_0^2(Q)$;
	\item $\int_{Q}\Pi_Q^0\bm u \cdot \curl q\d A = 0 \text{ for any } q\in \Sigma_h^k(Q)\cap H_0^2(Q)$.
\end{itemize}

Recall $P_h: L^2(\Omega)\rightarrow W_h^{k-1}$ is the $L^2$-orthogonal projection. 
From the non-vanishing edge DOFs, we have for any $Q\in\mathcal Q_h$, 
\[\int_Q \div\Pi_h^0\bm u - P_h\div\bm u\d A= \int_Q \div\Pi_h^0\bm u - \div\bm u \d A =  \int_{\partial Q} (\Pi_h^0\bm u - \bm u )\cdot \bm n_Q\d s =0,\]
which together with the non-vanishing interior DOFs leads to
$\div\Pi_h^0\bm u = P_h\div\bm u$.

We then define $\Pi_h: V\rightarrow V_h^k$ by
\begin{align}
\Pi_h = \Pi_h^0(\I-\bm I_h)+\bm I_h,\label{Pihdef}
\end{align}
where $\bm I_h:\bm H^{1/2+\delta}(\Omega)\rightarrow V_h^k$ is the Scott-Zhang interpolation defined in \cite{scott1990finite}. According to \cite[Theorem 3.3]{Ciarlet+2013+173+180}, for $T\in \mathcal T_h$,
\[h_T^{-1/2-\delta}\|\bm u-\bm I_h\bm u\|_{T}+\|\bm I_h\bm u\|_{1/2+\delta,T}\leq C\|\bm u\|_{1/2+\delta,\omega(T)},\]
where $\omega(T)=\mathop{\cup}\limits_{T'\in\mathcal T_h\atop\bar T'\cap \bar T\neq\emptyset}T'.$

Under the transformation 
\[\bm u\circ F_i = \frac{B_i}{|B_i|}\hat{\bm u},\]
we have
\begin{itemize}
	\item $\int_{e}\bm u\cdot\bm n_e q\d s = \int_{\hat e}\hat{\bm u} \cdot \hat{\bm n}_e q\d \hat{s}$,
	\item $\int_{Q}\div\bm u \cdot q\d A=\int_{\hat Q}\widehat{\div}\hat{\bm u} \cdot \hat{q}\d \hat A$,
\end{itemize}
which implies $\widehat{\Pi_Q^0\bm u}=\Pi_{\hat Q}^0\hat{\bm u}$.
By scaling argument, we obtain the following stability estimate of $\Pi_h^0$,
\begin{align}
	\|\Pi_Q^0\bm u\|_Q^2 &\leq C\|\widehat{\Pi_Q^0\bm u}\|_{\hat Q}^2 =C\|\Pi_{\hat Q}^0\hat{\bm u}\|_{\hat Q}^2 \leq C\left(\|\hat{\bm u}\|^2_{1/2+\delta,\hat Q}+\|\widehat \div\hat{\bm u}\|_{\hat Q}^2\right)\nonumber\\
	&\leq C\left(\|{\bm u}\|^2_{Q}+h^{1+2\delta}|{\bm u}|^2_{1/2+\delta, Q}+h^2\|\div{\bm u}\|_{Q}^2\right).\label{stability}
\end{align}

\begin{theorem}\label{theorem1}
	The interpolation  $\Pi_h$ defined by \eqref{Pihdef} satisfies  Assumption \ref{assump}. 
\end{theorem}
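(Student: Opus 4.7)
The plan is to verify the two parts of \textbf{H3} in turn: the commutativity \eqref{com-pih} and the approximation bound \eqref{approxi}.

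For commutativity, I would expand \eqref{Pihdef} and take the divergence,
\[
\div \Pi_h\bm\tau \;=\; \div\Pi_h^0\bm\tau \;-\; \div\Pi_h^0(\bm I_h\bm\tau) \;+\; \div\bm I_h\bm\tau.
\]
The identity $\div\Pi_h^0\bm u = P_h\div\bm u$, already established right after the definition of $\Pi_h^0$, rewrites the first two terms as $P_h\div\bm\tau - P_h\div\bm I_h\bm\tau$. Since $\bm I_h\bm\tau\in V_h^k$ forces $\div\bm I_h\bm\tau\in W_h^{k-1}$, the projection $P_h$ acts as the identity on this element, producing the cancellation $\div \Pi_h\bm\tau = P_h\div\bm\tau$.

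For the approximation bound, I would set $\bm w := \bm\tau - \bm I_h\bm\tau$, so that $\bm\tau - \Pi_h\bm\tau = \bm w - \Pi_h^0\bm w$, and estimate $\|\bm w\|$ and $\|\Pi_h^0\bm w\|$ in $L^2$ separately. The bound $\|\bm w\|_T \leq Ch_T^{1/2+\delta}\|\bm\tau\|_{1/2+\delta,\omega(T)}$ is immediate from the quoted Scott--Zhang estimate. For $\|\Pi_h^0\bm w\|_Q$, I would apply the stability \eqref{stability} to $\bm w$; two of the three resulting contributions, namely $\|\bm w\|_Q$ and $h^{1/2+\delta}|\bm w|_{1/2+\delta,Q}$, are of order $h^{1/2+\delta}\|\bm\tau\|_{1/2+\delta,\omega(Q)}$ by the Scott--Zhang $L^2$-estimate together with the $H^{1/2+\delta}$-stability $\|\bm I_h\bm\tau\|_{1/2+\delta,T}\leq C\|\bm\tau\|_{1/2+\delta,\omega(T)}$.

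The main obstacle is the third term $h\|\div\bm w\|_Q$: because $\bm\tau$ need not be in $H^1$, one cannot compare $\div\bm I_h\bm\tau$ with $\div\bm\tau$ directly. The plan is to exploit that $\bm I_h\bm\tau|_T$ is a polynomial of bounded degree and invoke the inverse estimate $\|\grad p\|_T\leq Ch_T^{-1/2+\delta}\|p\|_{1/2+\delta,T}$, which combined with $\|\bm I_h\bm\tau\|_{1/2+\delta,T}\leq C\|\bm\tau\|_{1/2+\delta,\omega(T)}$ yields $h\|\div\bm I_h\bm\tau\|_T\leq Ch_T^{1/2+\delta}\|\bm\tau\|_{1/2+\delta,\omega(T)}$, while $h\|\div\bm\tau\|_Q\leq Ch\|\bm\tau\|_V$ is trivial. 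Summing squares over $Q\in\mathcal Q_h$, using the finite overlap of the Scott--Zhang patches, and taking square roots gives $\|\bm\tau-\Pi_h\bm\tau\|\leq Ch^{1/2+\delta}\|\bm\tau\|_V$, so \textbf{H3} holds with $\omega_1(h)=Ch^{1/2+\delta}\to 0$ as $h\to 0^+$.
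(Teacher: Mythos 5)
Your argument is correct and reaches the same rate $\omega_1(h)=Ch^{1/2+\delta}$ as the paper, but it takes a genuinely different route at the one delicate point, the term $h\|\div(\bm\tau-\bm I_h\bm\tau)\|$. The paper handles this by introducing an auxiliary lowest-order Raviart--Thomas-type interpolation $\bm r_T$ into $\bm P_0(T)\oplus\bm x P_0(T)$, which commutes with the divergence ($\div\bm r_T\bm u=\pi_T\div\bm u$) and has $O(h_T^{1/2+\delta})$ $L^2$-error; it then writes $\div(\I-\bm I_h)\bm u=\div(\I-\bm r_T)\bm u+\div(\bm r_T-\bm I_h)\bm u$ and applies an integer-order inverse inequality to the polynomial difference $(\bm r_T-\bm I_h)\bm u$. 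You instead split off $\div\bm I_h\bm\tau$ directly and control it by the fractional inverse estimate $\|\grad p\|_T\le Ch_T^{-1/2+\delta}\|p\|_{1/2+\delta,T}$ for polynomials combined with the quoted $H^{1/2+\delta}$-stability of the Scott--Zhang operator, while bounding $h\|\div\bm\tau\|\le h\|\bm\tau\|_V$ trivially. This is legitimate: the fractional inverse estimate follows by scaling and the equivalence of $|\cdot|_{1,\hat T}$ and $|\cdot|_{1/2+\delta,\hat T}$ modulo constants on the reference triangle, but you should state it with the local mesh size (using $h_Q$ in the stability bound and $h_Q\approx h_T$ for $T\subset Q$ by shape regularity), so that the factor $h_Q h_T^{-1/2+\delta}$ is genuinely $O(h_T^{1/2+\delta})$ without invoking quasi-uniformity. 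What your route buys is the elimination of the auxiliary commuting interpolation $\bm r_T$; what the paper's route buys is that it relies only on integer-order inverse inequalities and standard $H(\div)$ interpolation estimates, i.e., entirely off-the-shelf tools, at the cost of one more operator. Your commutativity step ($P_h$ acts as the identity on $\div\bm I_h\bm\tau\in W_h^{k-1}$) is the same computation the paper performs by testing against $q_h\in W_h^{k-1}$, and, like the paper, you prove only \textbf{H3}, the other two hypotheses having been declared easy earlier in the text.
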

\begin{proof}
We first prove \eqref{com-pih}. For any $q_h\in W_h^{k-1}$, we have
\begin{align*}
(\div\Pi_h\bm u-P_h\div\bm u, q_h)& = (\div\Pi_h^0(\I-\bm I_h)\bm u+\div\bm I_h\bm u-P_h\div\bm u, q_h)\\
&= (P_h\div(\I-\bm I_h)\bm u+\div\bm I_h\bm u-P_h\div\bm u, q_h)=0,
\end{align*}
which implies \eqref{com-pih}.

We now prove \eqref{approxi}. 
By the definition of $\Pi_h$ and the stability estimate \eqref{stability} of $\Pi_h^0$,
\begin{align}\label{Pih}
\begin{split}
	&\|(\I-\Pi_h)\bm u\| ^2= \|(\I-\Pi_h^0)(\I-\bm I_h)\bm u\|^2 \\
&\leq C\left(\|(\I-\bm I_h)\bm u\|^2+h^{1+2\delta}|(\I-\bm I_h)\bm u|^2_{1/2+\delta}+h^2\|\div(\I-\bm I_h)\bm u\|^2\right).
\end{split}
\end{align}
To estimate $\|\div(\I-\bm I_h)\bm u\|$, we introduce the canonical interpolation $\bm r_T:\bm H^{1/2+\delta}(T)\rightarrow \bm P_0(T)\oplus \bm x P_0(T)$ for $T\in\mathcal T_h$ such that for $\bm u \in \bm H^{1/2+\delta}(T)$,
\[\int_e\bm r_T\bm u\cdot \bm n_e\d s = \int_e\bm u\cdot \bm n_e\d s \quad\text{ for any }e\subset{\partial T}.\]
Assume $\pi_T:L^2(T)\rightarrow P_0(T)$ is the $L^2$-orthogonal projection.  The interpolation $\bm r_T$ satisfies \cite{Monk2003}
\begin{align}
	&\div\bm r_T \bm u =\pi_T\div\bm u,\label{com-rh}\\
	&\|\bm u-\bm r_h \bm u\|_T\leq Ch_T^{1/2+\delta}\|\bm u\|_{1/2+\delta,T}.
\end{align}
Now we estimate  $\|\div(\I-\bm I_h)\bm u\|_T$ for $T\in\mathcal T_h$,
\begin{align*}
	\|\div(\I-\bm I_h)\bm u\|_T&\leq  \|\div(\I-\bm r_T)\bm u\|_T+\|\div(\bm r_T-\bm I_h)\bm u\|_T\\
	&\leq \|\div \bm u-\pi_T\div\bm u\|_T+Ch_T^{-1}\|(\bm r_T-\bm I_h)\bm u\|_T\quad\text{\big(\eqref{com-rh} and inverse inequality\big)}\\
	&\leq C\|\div \bm u\|_T+Ch_T^{-1}\|\bm r_T\bm u-\bm u\|_T+Ch_T^{-1}\|\bm u-\bm I_h\bm u\|_T\\
	&\leq C\|\div \bm u\|_T+Ch_T^{-1/2+\delta}\|\bm u\|_{1/2+\delta,T}+Ch_T^{-1}\|\bm u-\bm I_h\bm u\|_T,
\end{align*}
which, plugged into \eqref{Pih}, leads to 
\begin{align*}
\|(\I-\Pi_h)\bm u\| ^2 &\leq C\left(\|(\I-\bm I_h)\bm u\|^2+h^{1+2\delta}|(\I-\bm I_h)\bm u|^2_{1/2+\delta}+h^2\|\div\bm u\|^2+Ch^{1+2\delta}\|\bm u\|_{1/2+\delta}^2\right)\\
&\leq C\left(h^{1+2\delta}\|\bm u\|_{1/2+\delta}^2+h^2\|\div\bm u\|^2\right)\leq Ch^{1+2\delta}\|\bm u\|_V^2.
\end{align*}

\end{proof}

\begin{theorem}\label{Vbound}
	In addition to  Assumption \ref{assump}, the interpolation  $\Pi_h$ defined by \eqref{Pihdef} also satisfies
	\[\|\Pi_h\bm u\|_1\leq C\|\bm u\|_1.\]
Then $\Pi_h$ defines a $V$-bounded cochain projection for the complex \eqref{H2complex2-2D}. \end{theorem}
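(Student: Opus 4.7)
The plan is to establish the $H^1$-stability $\|\Pi_h\bm u\|_1 \leq C\|\bm u\|_1$ directly; the cochain projection claim then follows by combining this bound with the commutativity $\div\Pi_h = P_h\div$ proved in Theorem \ref{theorem1} and the idempotence of $\Pi_h$, which holds because the Scott-Zhang operator $\bm I_h$ fixes $V_h^k$, so $\Pi_h\bm u_h = \Pi_h^0(\bm 0) + \bm u_h = \bm u_h$ for $\bm u_h \in V_h^k$.

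Using the splitting $\Pi_h = \Pi_h^0(\I-\bm I_h) + \bm I_h$ from \eqref{Pihdef}, I would estimate the two pieces separately. The Scott-Zhang term is handled by the classical local stability $\|\bm I_h\bm u\|_{1,T} \leq C\|\bm u\|_{1,\omega(T)}$, which sums to a global $H^1$-bound by finite overlap. For the correction $\Pi_Q^0(\I-\bm I_h)\bm u$ on each macro-element $Q$, the key observation is that it lies in the piecewise polynomial space $V_h^k(Q)$, so an element-wise inverse inequality followed by shape-regularity gives
\[
\|\Pi_Q^0(\I-\bm I_h)\bm u\|_{1,Q} \leq Ch_Q^{-1}\|\Pi_Q^0(\I-\bm I_h)\bm u\|_{0,Q}.
\]
Applying the stability estimate \eqref{stability} with argument $(\I-\bm I_h)\bm u$ then reduces the task to bounding
\[
\|(\I-\bm I_h)\bm u\|_Q^2 + h^{1+2\delta}|(\I-\bm I_h)\bm u|_{1/2+\delta,Q}^2 + h^2\|\div(\I-\bm I_h)\bm u\|_Q^2
\]
by $Ch^2\|\bm u\|_{1,\omega(Q)}^2$.

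The $L^2$ and divergence contributions scale as required by the standard Scott-Zhang estimates $\|(\I-\bm I_h)\bm u\|_T \leq Ch_T\|\bm u\|_{1,\omega(T)}$ and $\|\div(\I-\bm I_h)\bm u\|_T \leq C\|\bm u\|_{1,\omega(T)}$. The intermediate-order term is the step I expect to require the most care: my plan is to interpolate between the zeroth- and first-order Scott-Zhang bounds to obtain $|(\I-\bm I_h)\bm u|_{1/2+\delta,T} \leq Ch_T^{1/2-\delta}\|\bm u\|_{1,\omega(T)}$, so that this term contributes $h^{1+2\delta}\cdot h^{1-2\delta} = h^2$ as needed. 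Combining the three contributions, applying the inverse inequality, and summing over $Q\in\mathcal Q_h$ using finite overlap of patches yields $\|\Pi_h^0(\I-\bm I_h)\bm u\|_1 \leq C\|\bm u\|_1$, which together with the Scott-Zhang bound on $\bm I_h$ completes the $H^1$-stability and, combined with idempotence and \eqref{com-pih}, makes $\Pi_h$ a $V$-bounded cochain projection at the middle slot of \eqref{H2complex2-2D}.
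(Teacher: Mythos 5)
Your proposal is correct and follows essentially the same route as the paper: the same splitting $\Pi_h=\Pi_h^0(\I-\bm I_h)+\bm I_h$, an inverse inequality on the correction term, the stability estimate \eqref{stability} applied to $(\I-\bm I_h)\bm u$, and the Scott--Zhang approximation/stability bounds. The only cosmetic difference is that the paper handles the intermediate-order term by simply dominating the $H^{1/2+\delta}$ seminorm by the $H^1$ seminorm (via the reference element), whereas you interpolate the Scott--Zhang error bounds to get the $h^{1/2-\delta}$ factor; both work, and your explicit remark on idempotence ($\Pi_h\bm u_h=\bm u_h$ for $\bm u_h\in V_h^k$) is a harmless addition the paper leaves implicit.
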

\begin{proof}
	By Theorem \ref{theorem1}, we have
	\[\|\Pi_h\bm u\|\leq C\|\bm u\|_1.\]
	It remains to show that
	\[\|\grad\Pi_h\bm u\|\leq C\|\bm u\|_1.\]
	According to the definition of $\Pi_h$, the inverse inequality, \eqref{stability}, and the approximation property $\bm I_h$, we have
	\begin{align*}
		\|\grad\Pi_h\bm u\|^2 = \ &\|\grad\Pi_h^0(\bm u-\bm I_h\bm u)+\grad\bm I_h\bm u\|^2 \\
		\leq \ & Ch^{-2}\|\Pi_h^0(\bm u-\bm I_h\bm u)\|^2+\|\grad\bm I_h\bm u\|^2\\
		\leq \  & C\left(\|{\bm u-\bm I_h\bm u}\|^2+h^{2}|{\bm u-\bm I_h\bm u}|^2_{1}+h^2\|\div(\bm u-\bm I_h\bm u)\|^2+|\bm I_h\bm u|_1^2\right)\\
		\leq \ &C\left(\|{\bm u-\bm I_h\bm u}\|^2+h^{2}|\bm u|_1^2+|\bm I_h\bm u|_1^2\right)\leq C\|\bm u\|_1^2.
	\end{align*}
\end{proof}

\section{Bounded Commuting Projections}
In this section, we will construct bounded commuting projections for the complexes \eqref{H2complex2-2D}. To be specific, we will construct $\pi_h$ by mimicking the Scott-Zhang interpolation \cite{scott1990finite, girault2002hermite} and $\widetilde{\Pi}_h$ by modifying ${\Pi}_h$ in Section 4 such that the following diagram commutes:
\begin{equation}\label{H2complex2-2D-interp}
\begin{tikzcd}
0 \arrow[r] 
& \mathbb{R} \arrow[d,"\mathrm{I}"]\arrow[r,"\subset"]  & H^2(\Omega)\arrow[d,"\pi_h"] \arrow[r,"\curl"]  & \bm H^1(\Omega)\arrow[d,"\widetilde{\Pi}_h" ]\arrow[r,"\div"]  & L^2(\Omega)\arrow[d,"P_h" ]\arrow[r]& 0\\
 0 \arrow[r] 
& \mathbb{R} \arrow[r,"\subset"]  & \Sigma_h^{k+1} \arrow[r,"\curl"]  & V_h^k\arrow[r,"\div"] &\div V_h^k\arrow[r] & 0.
\end{tikzcd}
\end{equation}

\paragraph{\textbf{The construction of}  $\pi_h$}
We denote by $\mathcal N_h =\mathcal N_h^0\cup \mathcal N_h^1$ the set of DOFs for $\Sigma_h^{k+1}$ with 
\[\mathcal N_h^0 = \{N_i, i=1,2,\cdots,2\mathcal V: N_iu = D_ju(v)\text{ for some }  j\in\{1,2\}, \ v\in \mathcal V_h \text{ with } D_1 = \partial_x,\ D_2 = \partial_y \},\]
and \[\mathcal N_h^1=\{N_i,i=2\mathcal V+1,\cdots, \dim\mathcal N_h:N_i\text{ is the DOF in $\mathcal N_h$ but not in }\mathcal N_h^0\}.\] 
For $u\in H^{2+\delta}(\Omega)$, we can define a canonical interpolation $\widetilde \pi_h: H^{2+\delta}(\Omega)\rightarrow \Sigma_h^{k+1}$ by the DOFs in $\mathcal N_h$:
\[ \widetilde \pi_h u = \sum_{i=1}^{2\mathcal V}N_i(u)\phi_{i}+\sum_{2\mathcal V+1}^{\dim \mathcal N_h }N_i(u)\phi_{i}=\sum_{v\in \mathcal V_h}\sum_{j=1}^2D_ju(v)\phi_{j}^v+\sum_{2\mathcal V+1}^{\dim \mathcal N_h }N_i(u)\phi_{i},\]
where $\{\phi_{i}\}_{i=1}^{\dim \mathcal N_h}$ is the nodal basis of $\Sigma_h^{k+1}$ such that $N_i(\phi_j)=\delta_{ij}$ for $N_i\in\mathcal N_h$.  For $i=1,2,\cdots,2\mathcal V$, there exists $j$ and $v$ such that $\phi_i = \phi_j^v$.

By scaling argument, 
\begin{align}\label{scaleofphijv}
\|\phi_j^v\|_{m,Q}\leq Ch_Q^{2-m}, \ j=1,2,\ v\in\mathcal V_h,\quad 
\|\phi_i\|_{m,Q}\leq Ch_Q^{1-m}, \text{ for }i = 2\mathcal V+1,\cdots ,\dim \mathcal N_h.
\end{align}

The DOFs in $\mathcal N_h^0$ are not well defined for $u\in H^2(\Omega)$. 
To define the mapping \(\pi_h: H^{2}(\Omega)\rightarrow \Sigma_h^{k+1}\), we modify DOFs in \(\mathcal N_h^0\) by adopting a similar approach to the one outlined  in \cite[(4.11)]{girault2002hermite}. However, we deviate by setting the parameter \(b_\kappa\) in their equation (4.11) to 1.
 For $v\in \mathcal V_h$, we select any edge $e_v$ such that
\[v\in \overline{e_v},\] 
subject to the restriction $e_v\in \partial\Omega$ if $v\in\partial\Omega$.

For $v\in\mathcal V_h$, according to Riesz's Representation Theorem, there exists a unique function $\psi_v\in P_{k+1}(e_v)$ such that 
\[\int_{e_v}\psi_v(\bm x)w(\bm x)\d \bm x = w(v)\text{ for any }w\in P_{k+1}(e_v).\]
Note that $\psi_v$ depends on the choice of $e_v$.
By construction, we have
\begin{align}\label{dij}
	&\int_{e_v}\psi_v(\bm x)D_j\phi_{i}(\bm x)\d \bm x = 0 \text{ for $i=2\mathcal V+1\cdots,\dim\mathcal N_h$} \text{ and }j=1,2,\\
	&\int_{e_v}\psi_v(\bm x)D_j\phi_{j^\prime}^{v^\prime}(\bm x)\d \bm x = \delta_{(j,v),(j^{\prime},v^{\prime})}.\label{dij2}
\end{align}

Then we define
\[\pi_h u = \sum_{v\in \mathcal V_h}\sum_{j=1,2} \int_{e_v}\psi_v(\bm x)D_ju(\bm x)\d \bm x \phi_{j}^v+\sum_{2\mathcal V+1}^{\dim \mathcal N_h }N_i(u)\phi_{i}.\]
By \eqref{dij}--\eqref{dij2}, we conclude that $\pi_h$ is a projection,
\[\pi_h u = u\ \ \ \ \text{ for any }u\in \Sigma_h. \]

According to \cite[(4.12)]{girault2002hermite}, 
\[\|\psi_v\|_{L^\infty(e_v)} \leq Ch^{-1}.\]
It follows from \cite[(3.6)]{scott1990finite} that
\begin{align}
\|\grad u\|_{L^1(e_v)} &\leq C\sum_{\ell=1}^2h_{T_v}^{\ell-1}|u|_{\ell,T_v}\quad\forall u\in H^2(T_v),\label{du}
\end{align}
where $T_v$ is a triangle in $\mathcal T_h$ that contains $e_v$ as an edge.

\begin{theorem}\label{stablity} Let $u\in H^2(\Omega)$. Then
\[\|\pi_h u\|_{m,Q}\leq C\sum_{\ell=0}^2h^{\ell-m}|u|_{\ell,S_Q},\text{ $m = 0,1,2$},\]
where 
\[S_Q= \text{interior}\big(\cup\{\overline T\in \mathcal T_h:\overline T\cap \overline Q\neq \emptyset\}\big).\]
\end{theorem}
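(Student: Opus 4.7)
My plan is to establish the bound locally, DOF by DOF. Since $\pi_h u|_Q$ is a linear combination of those basis functions whose support meets $Q$, and shape regularity bounds the number of such functions independently of $h$, it suffices by the triangle inequality to produce, for each contributing DOF $N_i$, an estimate of the form
\begin{equation*}
|N_i(u)|\,\|\phi_i\|_{m,Q}\leq C\sum_{\ell=0}^2 h^{\ell-m}|u|_{\ell,\omega_i},
\end{equation*}
where $\omega_i\subset S_Q$ denotes the support of the data used to evaluate $N_i(u)$. The basis-function scaling estimates \eqref{scaleofphijv} handle $\|\phi_i\|_{m,Q}$ uniformly, so the task reduces to bounding each functional $N_i$ by a weighted Sobolev-seminorm sum on its own support.

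For the $2\mathcal V$ modified vertex derivative DOFs ($i=1,\dots,2\mathcal V$), I would use H\"older's inequality, the bound $\|\psi_v\|_{L^\infty(e_v)}\leq Ch^{-1}$ (from \cite{girault2002hermite}), and the trace-type inequality \eqref{du} to write
\begin{equation*}
\left|\int_{e_v}\psi_v\, D_j u\,\d s\right|\leq \|\psi_v\|_{L^\infty(e_v)}\|\grad u\|_{L^1(e_v)}\leq C\sum_{\ell=1}^{2} h^{\ell-2}|u|_{\ell,T_v}.
\end{equation*}
Pairing with $\|\phi_j^v\|_{m,Q}\leq Ch^{2-m}$ from \eqref{scaleofphijv} yields the desired contribution $C\sum_{\ell=1}^{2}h^{\ell-m}|u|_{\ell,T_v}$, with $T_v\subset S_Q$ since $e_v$ is chosen incident to $v\in\overline{Q}$.

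For the remaining DOFs in $\mathcal N_h^1$—namely the point values $u(v)$, the edge moments $\int_e\partial_{n_e}u\,q\,\d s$, the normalized edge moments $|e|^{-1}\!\int_e u\,q\,\d s$, and the normalized triangle moments $|T|^{-1}\!\int_T u\,q\,\d A$—I would bound each functional by a combination of Cauchy--Schwarz, the standard scaled trace inequality on edges, and the Sobolev embedding $H^2(T_v)\hookrightarrow C^0(T_v)$ (for the point values), all combined with a change of variables via $F_Q$ to control the test functions $q=\hat q\circ F_Q^{-1}$ uniformly. Each such bound has the form $|N_i(u)|\leq C\sum_{\ell=0}^2 h^{\ell-1}|u|_{\ell,\omega_i}$ with $\omega_i$ a single triangle or edge in $S_Q$, and multiplying by $\|\phi_i\|_{m,Q}\leq Ch^{1-m}$ from \eqref{scaleofphijv} again yields the desired contribution.

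The main obstacle is bookkeeping rather than analysis: one has to verify that the support $\omega_i$ associated with every contributing DOF lies inside $S_Q$, and that the $h$-powers from the DOF bounds and the basis-function bounds combine consistently in each of the two regimes (factor $h^{2-m}$ for $\mathcal N_h^0$ paired with a $h^{-1}$ from $\|\psi_v\|_{L^\infty}$ versus factor $h^{1-m}$ for $\mathcal N_h^1$). Both facts are immediate: $\omega_i\subset S_Q$ follows from the definition of $S_Q$ together with the choice of $e_v$ and the localization of the interior moment DOFs to single triangles incident to $\overline{Q}$, and the $h$-powers are arranged precisely so that the modified derivative DOFs and the ordinary DOFs produce contributions of the same form. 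Summing the finitely many contributing terms then yields the stated estimate.
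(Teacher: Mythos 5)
Your proposal is correct and follows essentially the same route as the paper: expand $\pi_h u|_Q$ in the nodal basis, use the scaling bounds \eqref{scaleofphijv}, control the modified vertex DOFs via $\|\psi_v\|_{L^\infty(e_v)}\leq Ch^{-1}$ together with \eqref{du}, and bound the remaining DOFs by scaled trace/embedding arguments so that the $h^{2-m}$ and $h^{1-m}$ basis factors match the $h^{\ell-2}$ and $h^{\ell-1}$ functional bounds. The only cosmetic difference is that the paper bounds all DOFs in $\mathcal N_h^1$ at once by $C\sum_{e\in\mathcal E_h(Q)}\|\grad u\|_{L^1(e)}+C\|u\|_{L^\infty(Q)}$ and invokes the embedding on the reference element $\hat Q$, whereas you treat each DOF type individually with Cauchy--Schwarz and scaled trace inequalities; both yield the same estimate.
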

\begin{proof} We denote by $\mathcal N_h(Q)=\mathcal N_h^0(Q)\cup\mathcal N_h^1(Q)$ the set of DOFs for $V_h^{k}(Q).$ Then for $N_i\in\mathcal N_h^1(Q)$, by \eqref{du} and the embedding theorem, we have
\begin{align*}
	|N_i(u)|\leq &C\sum_{e\in \mathcal E_h(Q)}\|\grad u\|_{L^{1}(e)}+C\|u\|_{L^\infty(Q)}
	\leq C\sum_{\ell=1}^2h^{\ell-1}|u|_{\ell,S_Q}+\|\hat u\|_{L^\infty(\hat Q)}\\
	\leq &C\sum_{\ell=1}^2h^{\ell-1}|u|_{\ell,S_Q}+\|\hat u\|_{H^2(\hat Q)}\leq C\sum_{\ell=0}^2h^{\ell-1}|u|_{\ell,S_Q},
\end{align*}
where $C$ is independent on $h^{-1}$. 
	\begin{align*}
		\|\pi_h u\|_{m,Q} &\leq \sum_{N_i\in\mathcal N_h(Q)}|N_i(\pi_h u)| \|\phi_i\|_{m,Q} \\
		&\leq Ch^{2-m}\sum_{N_i\in\mathcal N_h^0(Q)}|N_i(\pi_h u)| \|\hat\phi_i\|_{m,\hat Q}+Ch^{1-m}\sum_{N_i\in\mathcal N_h^1(Q)}|N_i(\pi_h u)| \|\hat\phi_i\|_{m,\hat Q} \\
		& \leq Ch^{2-m}\sum_{v\in \mathcal V_h(K)}\sum_{j=1,2}\Big| \int_{e_v}\psi_v(\bm x)D_ju(\bm x)\d \bm x\Big| +\sum_{N_i\in \mathcal N_h^1(Q)}Ch^{1-m}|N_i(u)|\\
		& \leq Ch^{2-m}\sum_{v\in \mathcal V_h(K)}\sum_{j=1,2}\|\psi_v\|_{L^{\infty}(e_v)}\|D_ju\|_{L^1(e_v)} +\sum_{N_i\in \mathcal N_h^1(Q)}Ch^{1-m}|N_i(u)|\\
		& \leq Ch^{\ell-m}\sum_{v\in \mathcal V_h(K)}\sum_{\ell=1}^2|u|_{\ell,T_v} +Ch^{\ell-m}\sum_{\ell=0}^2|u|_{\ell,S_Q} \leq Ch^{\ell-m}\sum_{\ell=0}^2|u|_{\ell,S_Q} .
	\end{align*}
\end{proof}

\begin{theorem}\label{appx-pi}
	 Let $u\in H^2(\Omega)$. Then
\[\sum_{Q\in \mathcal Q_h}h_Q^{2(m-l)}\|u-\pi_h u\|_{m,Q}^2\leq C|u|^2_{l},\]
with integer $0\leq m\leq l\leq 2$.
\end{theorem}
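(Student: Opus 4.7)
The plan is to use the classical Bramble--Hilbert argument combined with the reproduction property of $\pi_h$ onto $\Sigma_h^{k+1}$ and the local stability established in Theorem~\ref{stablity}. Since $0\le l\le 2\le k+1$, every polynomial of total degree $\le l$ is continuous on $S_Q$ and belongs to $\Sigma_h^{k+1}$, and is therefore fixed by $\pi_h$.

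For each $Q\in\mathcal Q_h$, the first step is to invoke the Bramble--Hilbert lemma on the patch $S_Q$ to obtain a polynomial $p_Q$ of degree at most $l$ satisfying
\[
|u-p_Q|_{\ell, S_Q}\le C h_Q^{\,l-\ell}\,|u|_{l, S_Q},\qquad 0\le\ell\le l,
\]
with $C$ depending only on shape-regularity. By the reproduction property, $\pi_h p_Q=p_Q$ on $Q$, so that on $Q$
\[
u-\pi_h u = (u-p_Q)-\pi_h(u-p_Q).
\]
The second step is to apply the triangle inequality and then Theorem~\ref{stablity} to the second term, which gives
\[
\|u-\pi_h u\|_{m,Q}\le \|u-p_Q\|_{m,Q}+C\sum_{\ell=0}^{2}h_Q^{\,\ell-m}\,|u-p_Q|_{\ell, S_Q}.
\]
Substituting the Bramble--Hilbert estimates, each summand is controlled by $C h_Q^{\,l-m}|u|_{l, S_Q}$, yielding the local bound $\|u-\pi_h u\|_{m,Q}\le C h_Q^{\,l-m}|u|_{l, S_Q}$.

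The third step is to square, multiply by $h_Q^{2(m-l)}$, and sum over $Q\in\mathcal Q_h$. The finite overlap of the patches $\{S_Q\}$, which follows from the shape-regularity of $\mathcal T_h$, converts $\sum_Q |u|_{l, S_Q}^2$ into a constant multiple of $|u|_l^2$, and produces the claimed inequality.

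The delicate point, and the main obstacle, is the stability step when $l<2$: because $p_Q$ has degree at most $l$, the seminorms $|u-p_Q|_{\ell, S_Q}$ for $\ell>l$ reduce to $|u|_{\ell, S_Q}$, which a priori are not controlled by $|u|_{l, S_Q}$ alone. To close the argument uniformly in $(m,l)$ one needs a refinement of Theorem~\ref{stablity} in which, for inputs whose Bramble--Hilbert residual has lower regularity, the $\ell$-sum effectively truncates at $\ell=l$. Concretely, one revisits each DOF bound used to prove Theorem~\ref{stablity} (the vertex moments $\int_{e_v}\psi_v D_j w\,\d\bm x$ and the edge/interior functionals collected in $\mathcal N_h^1$) and, applying trace and embedding inequalities tuned to the regularity $l$, estimates $|N_i(u-p_Q)|$ by $\sum_{\ell\le l} h_Q^{\ell-1}|u-p_Q|_{\ell, S_Q}$. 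Once this refined stability is available, the substitution of the Bramble--Hilbert bounds closes the estimate for all admissible $0\le m\le l\le 2$.
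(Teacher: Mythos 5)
Your overall route is exactly the one the paper intends: the Scott--Zhang argument (a Bramble--Hilbert polynomial $p_Q$ on the patch $S_Q$, invariance of $\pi_h$ on $\Sigma_h^{k+1}$, the local stability of Theorem \ref{stablity}, and finite overlap of the patches $S_Q$), which is precisely what the paper's one-line proof refers to. For $l=2$, $0\le m\le 2$ --- the case the paper actually uses later (e.g.\ $\|\grad w-\grad\pi_h w\|\le Ch\|w\|_2$) --- your argument is complete and correct.

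The gap is in your handling of $l<2$. You rightly note that for $\ell>l$ the terms $|u-p_Q|_{\ell,S_Q}=|u|_{\ell,S_Q}$ are not controlled by $|u|_{l,S_Q}$, but the remedy you propose --- a refined Theorem \ref{stablity} whose sum truncates at $\ell=l$ --- does not exist for this operator. The modified vertex functionals $\int_{e_v}\psi_v D_j v\,\d s$, the vertex values, and the edge moments of $\partial_{n_e}v$ contained in $\mathcal N_h^1$ are not bounded on $H^1(S_Q)$ (for $l=1$), still less on $L^2(S_Q)$ (for $l=0$): the trace of $\grad v$ on an edge is not even defined for $v\in H^1$, so no tuning of trace or embedding inequalities can give $|N_i(v)|\le C\sum_{\ell\le l}h_Q^{\ell-1}|v|_{\ell,S_Q}$ when $l\le 1$. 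This is why Scott--Zhang need $l\ge 1$ for Lagrange-type DOFs and Girault--Scott need more regularity for derivative DOFs; what your argument genuinely yields for $l<2$ is the weaker bound with an additional $h_Q^{2-l}|u|_{2,S_Q}$ contribution, and indeed for $l=0$ the asserted inequality would force a uniform $L^2$ bound on $\pi_h$ over $H^2$, which fails since the DOFs are not $L^2$-bounded. So either restrict the proof (and statement) to $l=2$, which suffices for the paper's subsequent use, or keep the extra $|u|_2$ term for $l=0,1$; the ``refined stability'' step as written cannot be carried out.
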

\begin{proof}
	By following the argument used in \cite[Theorem 4.1]{scott1990finite} and using Theorem \ref{stablity}, we can prove the theorem.
\end{proof}

\paragraph{\textbf{The construction of}  $\widetilde\Pi_h$ }
An $H^{1}$-orthogonal decomposition for
$\bm u\in \bm H^1(\Omega)$ leads to
\[\bm u = \grad w + \bm u^{\perp},\]
where $w\in H^2(\Omega)$ with $\int_\Omega w =0$ and $ \bm u^{\perp}\in \{\bm H^1(\Omega): (\bm u^{\perp},\bm q)+(\grad\bm u^{\perp},\grad\bm q)=0\text{ for any }\bm q\in \grad H^2(\Omega)\}.$ Then
\[\|\grad w\|_1+\|\bm u^{\perp}\|_1\leq \|\bm u\|_1.\]

We define $\widetilde\Pi_h: \bm H^1(\Omega)\rightarrow V_h^k$ by
\[\widetilde\Pi_h \bm u = \grad \pi_h w +\Pi_h\bm u^{\perp}.\]
Clearly, we have the commutative property,
\begin{align}\label{commuting}
\widetilde\Pi_h \grad w = \grad \pi_h w, \quad \curl\widetilde\Pi_h \bm u =  P_h\curl\bm u.	
\end{align}

\begin{theorem}
	Let $\bm u\in \bm H^1(\Omega)$. Then
\begin{align}
\|\bm u-\widetilde \Pi_h\bm  u\|\leq Ch\|\bm u\|_{1},\\
\|\widetilde \Pi_h\bm  u\|_1 \leq C\|\bm u\|_1.\label{vbound2}
\end{align}
\end{theorem}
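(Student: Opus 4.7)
\emph{Proof plan.} The plan is to leverage the $H^1$-orthogonal decomposition $\bm u = \grad w + \bm u^{\perp}$ already fixed in the construction of $\widetilde\Pi_h$, together with the provided bound $\|\grad w\|_1 + \|\bm u^{\perp}\|_1 \leq \|\bm u\|_1$. Both claims then reduce to separate estimates on the ``gradient piece'' $\grad \pi_h w$ and the ``solenoidal piece'' $\Pi_h \bm u^{\perp}$, and each of these can be attacked with tools already established.

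For the approximation estimate, the triangle inequality gives
\[
\|\bm u - \widetilde\Pi_h \bm u\| \leq \|\grad(w - \pi_h w)\| + \|\bm u^{\perp} - \Pi_h \bm u^{\perp}\|.
\]
I would handle the first term with Theorem~\ref{appx-pi} at $l=2$, $m=1$: it supplies $\|w - \pi_h w\|_1 \leq Ch\,|w|_2$, which, via Poincar\'e on $w$ (applicable since $\int_\Omega w = 0$), is further bounded by $Ch\|\grad w\|_1 \leq Ch\|\bm u\|_1$. The second term requires a sharper estimate than Theorem~\ref{theorem1} supplies directly, because that theorem is tailored to the weaker norm $\|\cdot\|_V$ and produces the suboptimal rate $h^{1/2+\delta}$. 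I would rerun the identity $(\I - \Pi_h)\bm u^{\perp} = (\I - \Pi_h^0)(\I - \bm I_h)\bm u^{\perp}$ together with the local stability bound \eqref{stability}, but now using the sharper Scott-Zhang estimates $\|\bm u^{\perp} - \bm I_h \bm u^{\perp}\|_{s} \leq C h^{1-s}\|\bm u^{\perp}\|_1$ for $s \in \{0,\ 1/2+\delta,\ 1\}$. Each of the three contributions on the right-hand side of \eqref{stability} is then $O(h^2\|\bm u^{\perp}\|_1^2)$, giving $\|\bm u^{\perp} - \Pi_h \bm u^{\perp}\| \leq Ch\|\bm u^{\perp}\|_1 \leq Ch\|\bm u\|_1$.

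For the $H^1$-stability \eqref{vbound2}, the solenoidal piece is immediate: $\|\Pi_h \bm u^{\perp}\|_1 \leq C\|\bm u^{\perp}\|_1$ is Theorem~\ref{Vbound}. For the gradient piece I would use $\|\grad \pi_h w\|_1 \leq \|\pi_h w\|_2$ together with Theorem~\ref{appx-pi} at $l = m = 2$, which yields $\|w - \pi_h w\|_2 \leq C|w|_2$. The triangle inequality and Poincar\'e on $w$ then give $\|\pi_h w\|_2 \leq C\|w\|_2 \leq C\|\grad w\|_1 \leq C\|\bm u\|_1$. Summing the two pieces completes \eqref{vbound2}.

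The hard part is the sharpening of the $L^2$-error of $\Pi_h$ on $\bm H^1$ mentioned above. The seemingly suspicious middle term $h^{1+2\delta}|\bm u^{\perp} - \bm I_h \bm u^{\perp}|_{1/2+\delta}^2$ of \eqref{stability} balances out to $O(h^2\|\bm u^{\perp}\|_1^2)$ precisely because the Scott-Zhang error in $H^{1/2+\delta}$ is of order $h^{1/2-\delta}$; tracking this cancellation is what makes the argument for $\bm H^1$ data proceed differently from the $V$-data argument in Theorem~\ref{theorem1}, and it is where most of the care is required.
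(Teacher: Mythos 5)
Your proposal is correct and follows the same route as the paper: split via the fixed decomposition $\bm u = \grad w + \bm u^{\perp}$, treat $\grad(w-\pi_h w)$ with Theorem \ref{appx-pi} plus Poincar\'e, treat $\bm u^{\perp}-\Pi_h\bm u^{\perp}$ through $\Pi_h$, and get \eqref{vbound2} from Theorem \ref{Vbound} together with $H^2$-control of $\pi_h w$. The one place where you genuinely add something is the solenoidal term: the paper simply invokes Theorem \ref{theorem1} to write $\|\bm u^{\perp}-\Pi_h\bm u^{\perp}\|\leq Ch\|\bm u^{\perp}\|_1$, even though that theorem as stated only gives the rate $h^{1/2+\delta}$ in the $V$-norm, which for $\delta<1/2$ is weaker than the claimed $O(h)$. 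Your fix --- rerunning $(\I-\Pi_h)\bm u^{\perp}=(\I-\Pi_h^0)(\I-\bm I_h)\bm u^{\perp}$ through \eqref{stability} with the Scott--Zhang estimates at the $H^1$ level, so that the three terms (including the $h^{1+2\delta}|\cdot|^2_{1/2+\delta}$ one, balanced by the $h^{1/2-\delta}$ fractional error) are each $O(h^2\|\bm u^{\perp}\|_1^2)$ --- is exactly the argument needed to justify the stated rate, and your explicit treatment of the gradient piece in \eqref{vbound2} via Theorem \ref{appx-pi} with $l=m=2$ fills in what the paper leaves as ``similarly.'' So the approach is the same, but your version is the more careful one.
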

\begin{proof}By the triangle inequality, Theorem \ref{appx-pi}, Theorem \ref{theorem1}, and the Poincar\'e inequality,
	\begin{align*}
		&\|\bm u-\widetilde \Pi_h\bm  u\|\leq \|\grad w-\grad \pi_h w\|+\|\bm u^{\perp}-\Pi_h\bm u^{\perp}\|\\
		\leq & Ch\|w\|_2 + Ch\|\bm u^{\perp}\|_1\leq Ch\|\grad w\|_1 + Ch\|\bm u^{\perp}\|_1\leq Ch\|\bm u\|_1.
	\end{align*}
	Similarly, by Theorem \ref{Vbound}, we can prove \eqref{vbound2}.
\end{proof}

\begin{theorem}
	Define the cohomology space for the top complex in \eqref{H2complex2-2D}
\begin{align*}
    \mathcal H^0 &:= \ker\big(\curl: H^2(\Omega)\rightarrow \bm H^1(\Omega)\big)\slash \mathbb R,\ \ \mathcal H^2 := \ran\big(\div:\bm H^1(\Omega)\rightarrow L^2(\Omega)\big),\\
	\mathcal H^1 &:= \ker\big(\div:\bm H^1(\Omega)\rightarrow L^2(\Omega)\big)\slash \ran\big(\curl: H^2(\Omega)\rightarrow \bm H^1(\Omega)\big),
\end{align*}
and the cohomology spaces for the bottom complex in \eqref{H2complex2-2D}
\begin{align*}
    \mathcal H_h^0 &:= \ker\big(\curl:\Sigma_h\rightarrow V_h\big)\slash \mathbb R, \ \ \mathcal H_h^2 := \ran\big(\div:V_h\rightarrow \div V_h\big),\\
	\mathcal H_h^1 &:= \ker\big(\div:V_h\rightarrow \div V_h\big)\slash \ran\big(\curl:\Sigma_h\rightarrow V_h\big).
\end{align*}
Then the interpolations $\pi_h$, $\widetilde \Pi_h$, and $P_h$ induce an isomorphism from $\mathcal H^\ell$ onto $\mathcal H_h^\ell$.
\end{theorem}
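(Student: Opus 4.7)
The plan is to exhibit the map $\bar{\Pi}^\ell:\mathcal{H}^\ell\to\mathcal{H}_h^\ell$ induced by $(\pi_h,\widetilde{\Pi}_h,P_h)$ together with the map $\bar{i}^\ell:\mathcal{H}_h^\ell\to\mathcal{H}^\ell$ induced by the inclusion of the discrete subcomplex into the continuous one, and then show they are mutually inverse. Commutativity of \eqref{H2complex2-2D-interp}---established by the construction of $\pi_h$, by \eqref{commuting} for $\widetilde{\Pi}_h$, and directly for $P_h$---means that $(\pi_h,\widetilde{\Pi}_h,P_h)$ is a cochain map from the continuous Stokes complex to the discrete subcomplex. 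Hence it sends cocycles to cocycles and coboundaries to coboundaries, producing well-defined linear maps $\bar{\Pi}^\ell$ on cohomology for $\ell=0,1,2$. Since $\Sigma_h^{k+1}\subset H^2(\Omega)$, $V_h^k\subset\bm H^1(\Omega)$, and $\div V_h^k\subset L^2(\Omega)$, the inclusion is likewise a cochain map and induces $\bar{i}^\ell$ in the reverse direction.

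First I would verify $\bar{\Pi}^\ell\circ\bar{i}^\ell=\mathrm{id}_{\mathcal{H}_h^\ell}$: each of $\pi_h$, $\widetilde{\Pi}_h$, and $P_h$ restricts to the identity on its target discrete space, a fact that is built into their constructions (for $\pi_h$, from the biorthogonality relations \eqref{dij}--\eqref{dij2}; for $\widetilde{\Pi}_h$, from the Helmholtz-type decomposition together with Theorem \ref{theorem1}; for $P_h$, from its being an $L^2$-orthogonal projection onto $\div V_h^k$). Consequently $\bar{\Pi}^\ell$ is surjective and $\bar{i}^\ell$ is injective at every level.

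Next, to obtain $\bar{i}^\ell\circ\bar{\Pi}^\ell=\mathrm{id}_{\mathcal{H}^\ell}$, I would fix a cocycle $v\in V^\ell$ representing $[v]\in\mathcal{H}^\ell$ and let $\Pi^\ell$ denote the projection at level $\ell$. Commutativity with the differentials gives
\[d(v-\Pi^\ell v)=dv-\Pi^{\ell+1}\,dv=0,\]
so $v-\Pi^\ell v$ is itself a continuous cocycle. At this point I would invoke exactness of the continuous Stokes complex on the simply-connected Lipschitz domain $\Omega$, a Poincar\'e-type lemma: every cocycle of positive degree is a coboundary, and in degree zero any cocycle lies in $\mathbb{R}$ and so represents the zero class modulo $\mathbb{R}$. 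Consequently $[v]=[\Pi^\ell v]=\bar{i}^\ell\bar{\Pi}^\ell[v]$, which supplies the second identity and completes the isomorphism.

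The main obstacle is the invocation of exactness of the continuous Stokes complex; without it the third step cannot close. Fortunately, on simply-connected Lipschitz polygons this is a standard Poincar\'e-type lemma (it follows from solvability of the Poisson equation together with the classical de Rham Poincar\'e lemma), so it can be quoted from the literature. The remaining argument is routine bookkeeping on the chain map using only the commutativity of \eqref{H2complex2-2D-interp} and the projection property of $\pi_h$, $\widetilde{\Pi}_h$, and $P_h$.
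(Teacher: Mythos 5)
There is a genuine gap at level one. Your identity $\bar\Pi^{\ell}\circ\bar i^{\ell}=\mathrm{id}_{\mathcal H_h^{\ell}}$ rests on each map restricting to the identity on its discrete space. That is true for $\pi_h$ (by \eqref{dij}--\eqref{dij2}) and for $P_h$, but it is neither proved in the paper nor true in general for $\widetilde\Pi_h$: by definition $\widetilde\Pi_h\bm u=\grad\pi_h w+\Pi_h\bm u^{\perp}$, where $\bm u=\grad w+\bm u^{\perp}$ is the \emph{continuous} $H^1$-orthogonal splitting. For $\bm u_h\in V_h^k$ the components $w$ and $\bm u_h^{\perp}$ are not discrete, so $\pi_h w\neq w$ and $\Pi_h\bm u_h^{\perp}\neq\bm u_h^{\perp}$, and there is no reason the sum reproduces $\bm u_h$. (The operator that \emph{is} the identity on $V_h^k$ is $\Pi_h$ of \eqref{Pihdef}, because the Scott--Zhang operator $\bm I_h$ is; Theorem \ref{theorem1} says nothing about $\widetilde\Pi_h|_{V_h^k}$.) To salvage the step at the cohomology level you would need $\widetilde\Pi_h\bm u_h-\bm u_h\in\curl\Sigma_h^{k+1}$ for every discrete divergence-free $\bm u_h$; since by commutativity this difference is itself a discrete divergence-free field, that requirement is exactly discrete exactness at $V_h^k$, i.e. $\mathcal H_h^1=0$ --- essentially the nontrivial content of the theorem. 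Note also that your second identity $\bar i^{\ell}\circ\bar\Pi^{\ell}=\mathrm{id}_{\mathcal H^{\ell}}$ is vacuous once you invoke exactness of the continuous complex (it makes $\mathcal H^{\ell}$ trivial), so all of the substance sits precisely in the step where the gap occurs.

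The paper follows a different route that does not need $\widetilde\Pi_h$ to be a strict projection onto the subcomplex at this point: with the commutativity \eqref{commuting} and the bounds of Theorems \ref{appx-pi} and \ref{vbound2} in hand, it appeals to Theorem 5.1 of \cite{arnold2018finite} and verifies its hypothesis (5.8), namely $\|\bm\varphi-\Pi_h\bm\varphi\|<\|\bm\varphi\|$ for all nonzero harmonic $1$-forms $\bm\varphi$, which follows from the approximation estimate of Theorem \ref{theorem1} together with norm equivalence on the finite-dimensional harmonic space, for $h$ small enough. If you prefer a self-contained argument, you must either prove (or build into the construction) that $\widetilde\Pi_h$ is the identity on $V_h^k$, or replace your first identity by a direct appeal to the exactness of the discrete complex \eqref{H2complex2-2D-fe} established in Section 3; the latter is legitimate on this simply-connected domain but reduces the theorem to the triviality of both cohomologies rather than exhibiting the projection mechanism the paper invokes.
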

\begin{proof}
By \eqref{commuting} and Theorems \ref{appx-pi} and \ref{vbound2}, the interpolations $\pi_h$, $\widetilde \Pi_h$, and $P_h$ are bounded commuting projections. 
Then to prove the theorem, according to Theorem 5.1 in \cite{arnold2018finite}, it suffices to show \cite[(5.8)]{arnold2018finite}. We only show \cite[(5.8)]{arnold2018finite} for  $\Pi_h$. It reads
\[\|\bm\varphi-\Pi_h\bm \varphi\|<\|\bm\varphi\|, \ 0\neq \bm\varphi\in \mathfrak H^1,\]
where $\mathfrak H^1$ is the space of harmonic 1-forms with dimension equal to the first Betti number of the domain $\Omega$. 
From Theorem \ref{theorem1}, 
\[\|\bm \varphi-\Pi_h\bm \varphi\|\leq Ch\|\bm\varphi\|_1\leq Ch\|\bm\varphi\|<\|\bm\varphi\|,\text{ for }h \text{ small enough},\]
where we have used the norm equivalence for the finite-dimensional space $\mathfrak H^1$. 
\end{proof}

\section{Numerical Examples}
\subsection{Example 1}
In this example, we compute eigenvalues of \eqref{fem1} on a square domain $\Omega = (0,\pi)\times(0,\pi)$ using quadratic and cubic Lagrange elements on criss-cross mesh. Figure \ref{figure1} shows two criss-cross meshes of $\Omega$ with $h=\pi/8$: a rectangular criss-cross mesh and a quadrilateral criss-cross mesh.

In Tables \ref{table1} and \ref{table3}, we present the first ten eigenvalues and errors computed by quadratic Lagrange elements on rectangular and quadrilateral cirss-cross meshes for a fixed $h=\pi/64$. In Tables \ref{table2} and \ref{table4}, we list the rates of convergence of the first eigenvalue. 

In Table \ref{table5}, we show the first ten eigenvalues and errors computed by cubic Lagrange elements on a rectangular Criss-cross mesh for a fixed $h=\pi/64$. In Table \ref{table6}, we list the rate of convergence of the first eigenvalue.

\begin{figure}[h!]
	\includegraphics[width=0.29\textwidth]{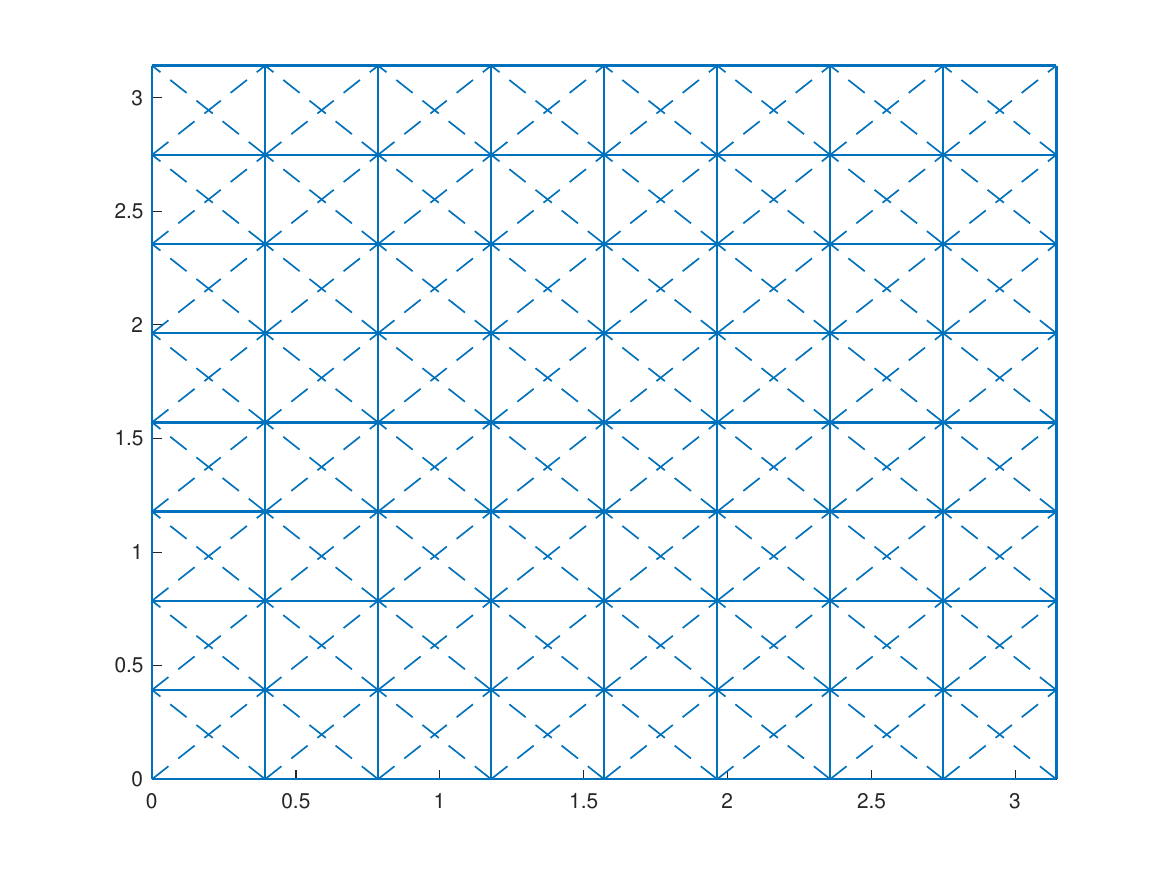}
	\includegraphics[width=0.29\textwidth]{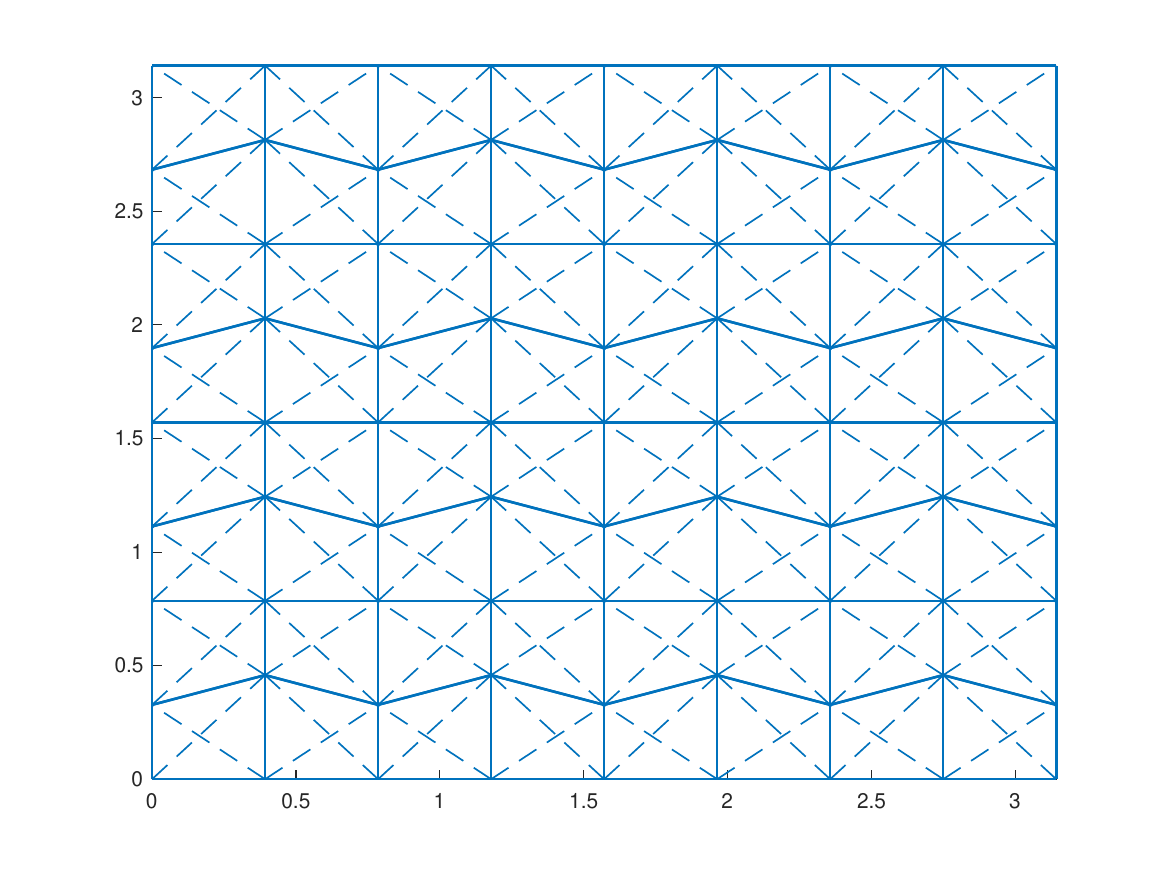}
	\includegraphics[width=0.29\textwidth]{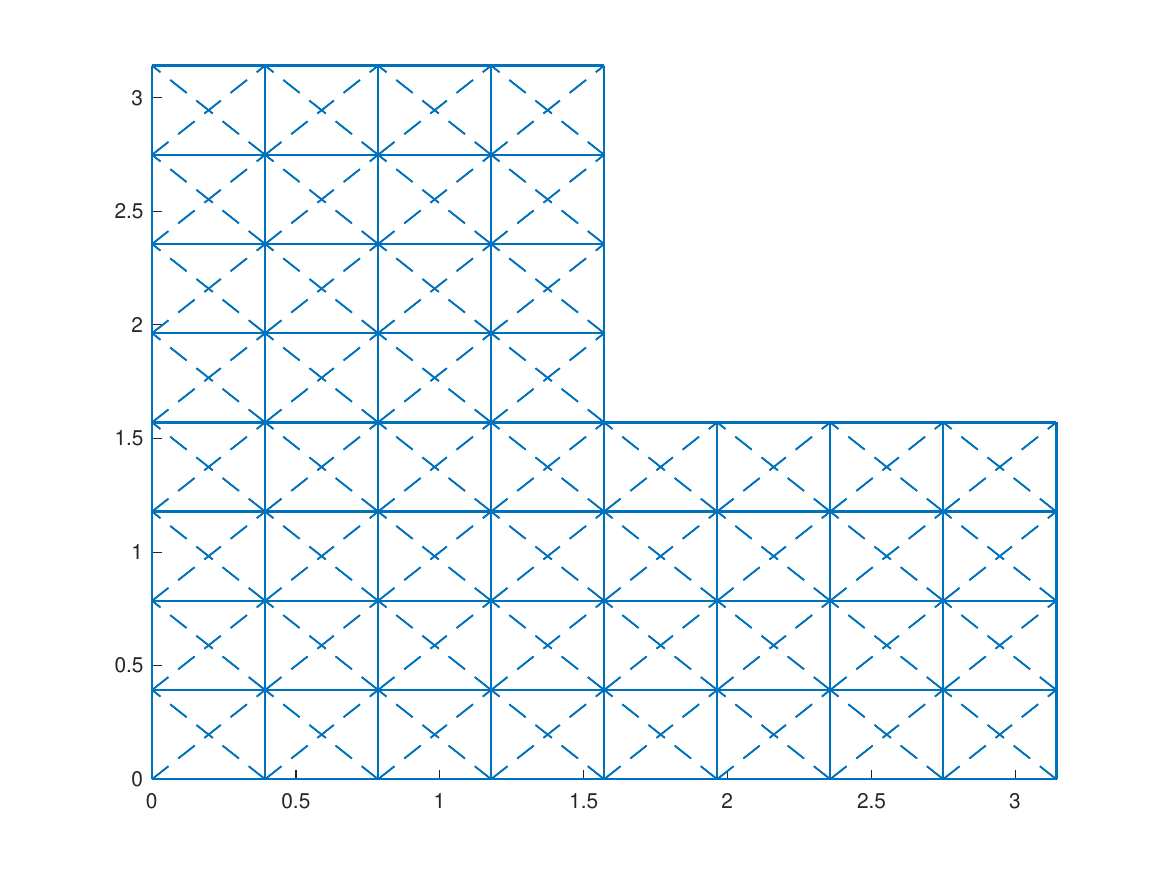}
	\caption{A rectangular criss-cross mesh of $(0,\pi)\times(0,\pi)$  (left), a quadrilateral criss-cross mesh of $(0,\pi)\times(0,\pi)$ (middle), and a rectangular criss-cross mesh of $(0,\pi)\times(0,\pi)\backslash[\frac{\pi}{2},\pi)\times [\frac{\pi}{2},\pi)$ (right)  with $h=\pi/8$}\label{figure1}
\end{figure}
     
\begin{table}[h]
	\caption{Example 1: first ten numerical eigenvalues with $k = 2$ and $h=\pi/64$ on rectangular criss-cross mesh}
	\begin{tabular}{|c|c|c|c|}
	\hline
		$i$&$\lambda^{(i)}$&$\lambda_h^{(i)}$&$\big|\lambda^{(i)}-\lambda_h^{(i)}\big|$\\\hline
      1&2&2.000000009674456&     9.674455903052603e-09\\
      2&5&5.000000169214253 &    1.692142532760954e-07\\
      3&5& 5.000000169215196 &    1.692151956333987e-07\\
      4&8& 8.000000618708425 &    6.187084249376085e-07\\
      5&10&10.00000146560498 &    1.465604976047530e-06\\
      6&10&10.00000146561097 &    1.465610974804576e-06\\
      7&13&13.00000278345120 &    2.783451204636020e-06\\
      8&13&13.00000278345120 &    2.783451204636020e-06\\
     9&17& 17.00000746915364 &    7.469153640471404e-06\\
     10&17& 17.00000746915827 &    7.469158269657328e-06\\
	\hline
	\end{tabular}
\label{table1}
\end{table}
     
\begin{table}[h]
	\caption{Example 1: convergence rate of the first eigenvalue $k = 2$ on rectangular criss-cross mesh}
	\begin{tabular}{|c|c|c|}
	\hline
		$h$&$\big|\lambda^{(1)}-\lambda_h^{(1)}\big|$&rate\\\hline
  $\pi/8$&  3.918771488331529e-05&-\\
   $\pi/16$& 2.468843263603304e-06  & 3.9885\\
  $\pi/ 32$&1.546846171152083e-07  & 3.9964\\
   $\pi/64$&9.674455903052603e-09& 3.9990\\
  	\hline
	\end{tabular}
\label{table2}
\end{table}

\begin{table}[h]
	\caption{Example 1: first ten numerical eigenvalues with $k = 2$ and $h\approx\pi/64$ on quadrilateral criss-cross mesh}
	\begin{tabular}{|c|c|c|c|}
	\hline
		$i$&$\lambda^{(i)}$&$\lambda_h^{(i)}$&$\big|\lambda^{(i)}-\lambda_h^{(i)}\big|$\\\hline
   1&2&2.000000013083459&1.308345876083195e-08\\
   2&5& 5.000000196097877& 1.960978774917521e-07\\
   3&5&5.000000242443800& 2.424438001469298e-07\\
   4&8&8.000000836667605&8.366676045312715e-07\\
   5&10&10.000001598937962& 1.598937961588831e-06\\
  6&10& 10.000002092860292&2.092860292179921e-06\\
  7&13&13.000003434403844&3.434403843982636e-06\\
  8&13&13.000003955633003& 3.955633003371872e-06\\
  9&17&17.000007950604662& 7.950604661743910e-06\\
  10&17&17.000010623894767&  1.062389476658154e-05\\
	\hline
	\end{tabular}
\label{table3}
\end{table}

\begin{table}[h]
	\caption{Example 1: convergence rate of the first eigenvalue $k = 2$ on quadrilateral criss-cross mesh}
	\begin{tabular}{|c|c|c|}
	\hline
		$h$&$\big|\lambda^{(1)}-\lambda_h^{(1)}\big|$&rate\\\hline
  $\pi/8$& 5.287702674161565e-05&-\\
  $\pi/16$&3.337272737269359e-06& 3.9859\\
 $ \pi/32$&   2.091739674803250e-07 & 3.9959\\
  $\pi/64$&  1.308399699695428e-08&3.9988\\
  	\hline
	\end{tabular}
\label{table4}
\end{table}
        
\begin{table}[h]
	\caption{Example 1: first ten numerical eigenvalues with $k = 3$ and $h=\pi/64$ on rectangular  criss-cross mesh}
	\begin{tabular}{|c|c|c|c|}
	\hline
		$i$&$\lambda^{(i)}$&$\lambda_h^{(i)}$&$\big|\lambda^{(i)}-\lambda_h^{(i)}\big|$\\\hline
    1&   2&   2.000000009674193&9.674193002240372e-09\\
   2&   5&   5.000000169214921& 1.692149211862670e-07\\
   3&   5&   5.000000169214921& 1.692149211862670e-07\\
   4&   8&   8.000000618707245&6.187072454366671e-07\\
   5&  10&  10.000001465604944&1.465604944073107e-06\\
   6&  10&  10.000001465605909&1.465605908634871e-06\\
   7&  13&  13.000002783449016& 2.783449016163786e-06\\
   8&  13&  13.000002783451830& 2.783451829913020e-06\\
   9&  17&  17.000007469156596&7.469156596329185e-06\\
  10&  17&  17.000007469156596&7.469156596329185e-06\\
	\hline
	\end{tabular}
\label{table5}
\end{table}

\begin{table}[h]
	\caption{Example 1: convergence rate of the first eigenvalue $k = 3$ on rectangular criss-cross mesh}
	\begin{tabular}{|c|c|c|}
	\hline
		$h$&$\big|\lambda^{(1)}-\lambda_h^{(1)}\big|$&rate\\\hline
  $\pi/8$&  3.918771494682005e-05&-\\
  $\pi/16$& 2.468843251612896e-06  & 3.9885\\
  $\pi/32$& 1.546833066079500e-07  & 3.9964\\
  $\pi/64$& 9.674193002240372e-09&3.9990\\
  	\hline
	\end{tabular}
\label{table6}
\end{table}

\subsection{Example 2}
In this example, we compute eigenvalues of discrete problem \eqref{fem1} on an L-shape domain $\Omega =(0,\pi)\times(0,\pi)\backslash[\frac{\pi}{2},\pi)\times [\frac{\pi}{2},\pi)$ using quadratic Lagrange elements on a criss-cross mesh. Figure \ref{figure1} shows a rectangular criss-cross mesh of $\Omega$ with $h=\pi/8$.

In the third column of Table \ref{table7}, we show the first ten eigenvalues computed by quadratic Lagrange elements on the rectangular cirss-cross mesh for a fixed $h=\pi/64$.  In the second column, we list the first ten eigenvalues computed by the primal formulation \eqref{primal-v} with quadratic Lagrange elements on the same mesh. We can see from the table that the quadratic Lagrange elements on criss-cross meshes can produce correctly convergent eigenvalues for \eqref{vf} on the L-shape domain. 

\begin{table}[h]
	\caption{Example 2: first ten numerical eigenvalues with $k = 2$ and $h=\pi/80$ on rectangular mesh}
	\begin{tabular}{|c|c|c|}
	\hline
		$i$&$\lambda^{(i)}_{\operatorname{primal}}$&$\lambda_h^{(i)}$\\\hline
    1&     3.907542086020698 & 3.905354563577878\\
   2&  6.159216512440113&6.159213093492622\\
   3&   8.000000359186648& 8.000000253513941\\
   4&  11.964607939274400& 11.964606066013996\\
   5& 12.935434918397130&12.930090853126162\\
   6& 16.810290508710921&16.806272631957246\\
   7& 18.216953452465365&18.216933140031713\\
   8&   20.000007388434128&20.000004430751620\\
   9& 20.000007388434170& 20.000004430753396\\
  10& 22.984997819905693&  22.980510520283705 \\   
	\hline
	\end{tabular}
\label{table7}
\end{table}

    \section*{Acknowledgement}

The work of KH was supported by a Royal Society University Research Fellowship (URF$\backslash$ R1$\backslash$221398).

\bibliographystyle{plain}

\bibliography{lagrange-crisscross-v2}{}
~\\
\end{document}